\documentclass[12pt,reqno]{amsart}
\usepackage{amsmath, amsthm, amsopn, amssymb, microtype}
\usepackage{mathrsfs} 
\usepackage{mathscinet}
\usepackage{bbm}
\usepackage{enumerate, tikz, etoolbox, intcalc, geometry, caption, subcaption, afterpage}
\usepackage[english]{babel}
\usepackage{enumitem}
\usepackage[colorlinks=true,urlcolor=blue,pdfborder={0 0 0}]{hyperref}
\hypersetup{linkcolor=[rgb]{0,0,0.6}}
\hypersetup{citecolor=[rgb]{0,0.6,0}}
\usepackage[nameinlink]{cleveref}
\crefname{theorem}{Theorem}{Theorems}
\crefname{thm}{Theorem}{Theorems}
\crefname{mainthm}{Theorem}{Theorems}
\crefname{conj}{Conjecture}{Theorems}
\crefname{lemma}{Lemma}{Lemmas}
\crefname{lem}{Lemma}{Lemmas}
\crefname{remark}{Remark}{Remarks}
\crefname{prop}{Proposition}{Propositions}
\crefname{defn}{Definition}{Definitions}
\crefname{corollary}{Corollary}{Corollaries}
\crefname{cor}{Corollary}{Corollaries}
\crefname{section}{Section}{Sections}
\crefname{figure}{Figure}{Figures}
\crefname{quest}{Question}{Questions}


\theoremstyle{plain}
\newtheorem{theorem}{Theorem}[section]
\newtheorem{lemma}[theorem]{Lemma}

\theoremstyle{definition}
\newtheorem{definition}[theorem]{Definition}
\newtheorem{remark}[theorem]{Remark}
\newtheorem{example}[theorem]{Example}

\DeclareMathOperator{\homeo}{Homeo}
\DeclareMathOperator{\sep}{sep}
\DeclareMathOperator{\shift}{shift}
\DeclareMathOperator{\Ker}{Ker}
\DeclareMathOperator{\Fix}{Fix}
\DeclareMathOperator{\ord}{ord}
\DeclareMathOperator{\Dn}{D}

\newcommand{\Z}{\mathbb{Z}}
\newcommand{\C}{\mathcal{C}}
\newcommand{\N}{\mathbb{N}}
\newcommand{\R}{\mathbb{R}}

\newcommand{\cF}{\mathcal{F}}

\newcommand{\cC}{\mathcal{C}}
\newcommand{\cG}{\mathcal{G}}
\newcommand{\cP}{\mathcal{P}}

\title{Equivariant embedding of finite-dimensional dynamical systems}
\author{Yonatan Gutman}
\address{
    Institute of Mathematics, 
    Polish Academy of Sciences, ul. Śniadeckich 8, 00-656 Warszawa,
    Poland.
	{\tt gutman@impan.pl}
}

\author{Michael Levin} 
\address{Ben-Gurion University of the Negev,
	Department of Mathematics,
	Beer-Sheva, 8410501, Israel.
	{\tt mlevine@bgu.ac.il}
}

\author{Tom Meyerovitch}
\address{Ben-Gurion University of the Negev.
	Department of Mathematics.
	Beer-Sheva, 8410501, Israel. 
	{\tt mtom@bgu.ac.il}
}
\keywords{Dynamical Menger–Nöbeling theorem, topological Takens delay embedding theorem, generalized Jaworski theorem, equivariant embedding, cubical shifts, mean dimension.}
\begin{document}
\begin{abstract}  
We prove an equivariant version of the classical Menger–Nöbeling theorem regarding topological embeddings: 
Whenever a group $G$ acts on a finite-dimensional compact metric space $X$, a generic continuous equivariant function from $X$ into $([0,1]^r)^G$ is a topological embedding, provided that for every positive integer $N$ the space of points in $X$  with orbit size at most $N$ has topological dimension strictly less than $\frac{rN}{2}$.
We emphasize that the result imposes no restrictions whatsoever on the acting group $G$ (beyond the existence of an action on a finite-dimensional space). Moreover if $G$ is finitely generated then there exists a finite subset $F\subset G$ so that  for a generic continuous map  $h:X\to [0,1]^{r}$, the map $
h^{F}:X\to ([0,1]^{r})^{F}$ given by $x\mapsto (f(gx))_{g\in F}$ is an embedding. This constitutes a 
generalization of the Takens delay embedding theorem into the topological category.
\end{abstract}
\date{\today} 
\maketitle

\section{Introduction}

Various mathematical problems in topology involve instances of the following fundamental question : Given a topological space $X$ and another topological space $Y$, when does $X$ (topologically) embed in $Y$?  According to the classical Menger–Nöbeling theorem, a compact metric space $X$ of 
(Lebesgue covering) 
dimension less than
$\frac{r}{2}$ admits a topological embedding into  $[0,1]^r$ (see \cite[Theorem 5.2]{HW41}).

In topological dynamics, the analogous fundamental embedding questions take the following form:   Given a group $G$ that acts on two topological spaces $X$ and  $Y$, when does there exist an \emph{equivariant} embedding of $X$ into $Y$, namely a continuous function $f: X \to Y$ that is a homeomorphism from $X$ onto the image $f(X) \subseteq Y$ and so that $f(g(x)) = g(f(x))$ for every $g \in G$ and every $f \in X$. In this paper, the topological spaces involved are always assumed to be compact and metrizable. So a continuous function $f: X \to Y$ defines a homeomorphism from $X$ onto the image $f(X) \subseteq Y$ if and only if it is injective.
In this paper, by a \emph{topological dynamical system} we mean a pair $(G,X)$, where $G$ is a topological group that acts by homeomorphisms on a compact metrizable space $X$.
 When $Y= ([0,1]^r)^G$ for some group $G$  is a Tychonoff cube and the action of the group $G$ on  $Y= ([0,1]^r)^G$ is the $G$-shift (see \Cref{subsec:cube_shift}) for a precise definition), the existence of an equivariant embedding of $X$ into $Y$ is equivalent to the existence of  a continuous injective mapping $f:X\to([0,1]^{d})^G$ for which it holds  $f(gx)_h=f(x)_{hg}$ for all $x \in X$ and $g,h \in G$.
The problem of equivariantly embedding into such a space $Y$, known as a ($G-$) \emph{$r$-cubical shift} has quite a long history and there is a fair amount of literature  on this problem, mostly for the case where the group $G$ is generated by a single homeomorphism (that is, $G= \Z$ or $G= \Z / m\Z$ for some $m \in \mathbb{N}$) or by finitely many commuting homeomorphisms (so that $G$ is a finitely generated abelian group). We review some of this history in the next paragraph.

In \cite{J74} Jaworski showed that for any   action of the group $G= \Z$ on any finite-dimensional compact metric space $X$ there exists an equivariant embedding of $X$ into $[0,1]^G$, under the assumption that the generator of $\Z$ corrsponds to  an \emph{aperiodic homeomoprhism} of $X$, where  a homeomorphism $T:X \to X$ is called aperiodic if $T^{n}x\neq x$ for all $x\in X$ and nonzero integer $n$. Later Nerurkar \cite{nerurkar1991} showed that the aperiodicity assumption in Jaworski's result can be weakened to the assumption that there are at most finitely  many periodic points with the same period.  
Gutman \cite{Gut16} showed that the aperiodicity assumption in Jaworski's result can be further weakened to the assumption that the set of periodic point in $X$ of period $N$ has dimension strictly less that $\frac{Nr}{2}$ for all $N$. An extension of this result  for actions of finitely generated  abelian groups was achieved by Gutman, Qiao and Szab\'{o} in \cite{GQS18}.

Our main result is a generalization of the above results where the acting group is arbitrary. Moreover as elucidated by Theorem  \ref{thm:sharpness}, our result is sharp in a strong sense. 

\begin{theorem}\label{thm:infinite_groups_Takens}
Let $(G,X)$ be a topological dynamical system where $X$ is  a finite-dimensional compact metric space.
Let $r\in \N$. Suppose that for every $N \in \N$ it holds
\begin{equation}\label{eq:periodic}
\dim(G,X)_N < \frac{rN}{2},    
\end{equation}
 where 
$$ (G,X)_N := \left\{ x \in X:~ |G \cdot x| \le N\right\} \mbox{ and } G\cdot x= \{gx : g \in G \}.
$$
Then a generic  continuous function $f:X \to [0,1]^r$ induces a $G$-equivariant topological embedding $f^{G}:X \to ([0,1]^r)^G$.
\end{theorem}

For the definition of \textit{generic}, see Subsection \ref{subsec:Genericity}.

\Cref{thm:infinite_groups_Takens}  implies in particular that if a finite group $G$ acts freely on $X$ and $\dim X < \frac{r}{2}|G|$ then $X$ embeds equivariantly into $(([0,1])^r)^G$. The case where $G$ is the trivial group coincides with the Menger–Nöbeling theorem.

\begin{remark}\label{rem:automatic_condition}
    Let $G$ be a group which does not have finite index subgroups (e.g. an infinite simple group), then Condition \eqref{eq:periodic} holds for any t.d.s $(G,X)$, $r,N\in \N$. Indeed for $x\in X$, let $\Fix_G(x)=\{g\in G|\,gx=x\}$. It is easy to see that there is a 1-1 correspondence between (left) cosets of $\Fix_G(x)$ and $Gx$.
Thus if  $|G \cdot x|<\infty$, then $\Fix_G(x)$ is of finite index in $G$. We conclude that for a group $G$ 
 which does not have finite index it holds $ (G,X)_N=\emptyset$ for all $N\in \N$. 
\end{remark}

\begin{remark}
    When $G$ is an infinite sofic group (for instance $G=\Z$),  it is not possible to remove the assumption that $X$ is finite dimensional in \Cref{thm:infinite_groups_Takens}. 
    Although in this case any compact metrizable space $X$ embeds (topologically) in $[0,1])^G$, 
    there is a further  obstruction to the existence of an equivariant embedding, namely the \emph{mean dimension} of  $(G,X)$.     
    Mean dimension is an isomorphism invariant of  topological  dynamical systems introduced
by Gromov (\cite{G}). Heuristically, whereas topological entropy measures the number
of bits per unit of time required to describe a point in a system, mean dimension
measures the required number of parameters per unit of time. The  initial systematic development of mean dimension theory  was carried out by Lindenstrauss
and Weiss in the seminar paper \cite{LW}. 
We refrain from  defining mean dimension here and refer the interested read to \cite{LW} and \cite{l12}.
Lindenstrauss and Tsukamoto 
formulated a conjecture in \cite{LT12}  regarding sufficient conditions for the existence of an equivariant embedding  of a $\Z$-dynamical system in the $\Z$-shift on $([0,1]^r)^\Z$. The  conjectured  sufficient conditions involve  mean dimension and dimensions of periodic points. For finite dimensional $\Z$-systems, the Lindenstrauss and Tsukamoto conjecture reduces to Gutman's result \cite{Gut16}. Additional cases of the conjecture were established in \cite{Gut15Jaworski,gutman2020embedding} but in full generality the conjecture is still open. 
In \cite{gutman2019application} a general embedding conjecture that generalizes the Lindenstrauss and Tsukamoto conjecture to $\mathbb{Z}^k$-actions $(X,\mathbb{Z}^k)$ ($k\in \N$) first appeared  explicitly. In the finite dimensional case, the conjecture is known to hold (\cite{GQS18}). Some additional cases of the conjecture were established in \cite{GQS18, gutman2019application} but in full generality the conjecture is still open. 
In contrast to \Cref{thm:infinite_groups_Takens}, the existing embedding theorems for infinite dimensional systems seem to rely heavily on the group structure of $\Z$ or $\Z^d$. 
Even the formulation of the embedding conjecture in \cite{gutman2019application} does not trivially extend to actions of non-commutative (say amenable) groups. For a free action $(G,X)$, it is still unknown having infinite mean dimension (when it is well-defined)  is the only additional obstruction for the existence of an equivariant embedding into $([0,1]^r)^G$ for some $r \in \mathbb{N}$.
\end{remark}

 

Complementing Remark \ref{rem:automatic_condition}, we have the following:

\begin{theorem}(Sharpness of Theorem \ref{thm:infinite_groups_Takens})\label{thm:sharpness}
    Let $N,r\in \N$. Let $G$ be a group which has a subgroup $G'$ of index $N$, then there exists a faithful t.d.s. $(G,X)$ so that  $\dim(G,X)_N = \lceil \frac{rN}{2}\rceil$ (thus Condition \ref{eq:periodic} does not hold ) and  for all  continuous functions $f:X \to [0,1]^r$ the induced map $f^{G}:X \to ([0,1]^r)^G$ is not injective.
\end{theorem}

We deduce \Cref{thm:infinite_groups_Takens} from the following theorem, our main auxiliary theorem:

\begin{theorem}\label{thm:gen_Takens}
Let $X,Y$  be compact metrizable spaces and let $\cF= \big(g_1,\ldots,g_N\big )$   be an $N$-tuple of continuous injective functions from $X$ to $Y$.
For every $f:Y \to [0,1]^r$, let $f^\cF:X \to ([0,1]^r)^N$ be given by 
\[f^\cF(x)_{i}= f(g_i(x)), ~i \in [N].\]
 For every partition $\cP$ of $\cF$ let
\begin{equation*}
X_{\cP} := \{ x\in X:~ \forall g_{i_1},g_{i_2} \in \cF,~ g_{i_1}(x)=g_{i_2}(x) ~\Leftrightarrow ~ \cP(g_{i_1})=\cP(g_{i_1})\}.
\end{equation*}
Suppose that for every partition $\cP$ of $\cF$ it holds that
\begin{equation}\label{eq:dim_inequality} \dim X_{\cP} < \frac{r}{2}|\cP|.
\end{equation}
Then the set of functions  $f \in C(Y,[0,1]^{r})$ for which $f^\cF:X \to ([0,1]^{r})^N$ is injective is a dense $G_\delta$ subset in $C(Y,[0,1]^{r})$.
\end{theorem}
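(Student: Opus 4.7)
The plan is to emulate the classical Menger--N\"obeling proof, stratifying pairs in $X\times X$ by the coincidence pattern of the combined tuple $(g_1(x),\ldots,g_N(x),g_1(y),\ldots,g_N(y))$. First, set
\[
U_n := \{f \in C(Y,[0,1]^r) \ :\ f^\cF(x) \neq f^\cF(y)\text{ whenever }d_X(x,y) \geq 1/n\};
\]
each $U_n$ is open by a standard compactness argument, and $\bigcap_n U_n$ is the set of $f$ with $f^\cF$ injective. By the Baire category theorem it suffices to show each $U_n$ is dense in $C(Y,[0,1]^r)$.

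Fix $n$. For each pair $(x,y)$ with $x\neq y$, let $\sigma(x,y)$ be the partition of $[2N]$ recording the equality pattern in the combined tuple. Injectivity of every $g_i$ forces $i$ and $i+N$ to lie in different blocks of $\sigma(x,y)$; there are only finitely many admissible partitions $\sigma$ with this property. For each such $\sigma$ put $X^{(2)}_\sigma := \{(x,y)\in X^2 : x\neq y,\ \sigma(x,y) = \sigma\}$ and define $\Psi_\sigma : X^{(2)}_\sigma \to Y^{|\sigma|}$ by picking one representative per block of $\sigma$. Injectivity of $g_1$ makes $\Psi_\sigma$ injective. The bad event $f^\cF(x) = f^\cF(y)$ then reads $f^{|\sigma|}(\Psi_\sigma(x,y)) \in D_\sigma$, where $D_\sigma \subseteq ([0,1]^r)^{|\sigma|}$ is the affine subspace whose defining equalities identify the $\mathrm{block}(i)$-coordinate with the $\mathrm{block}(i+N)$-coordinate for every $i\in[N]$. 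Its codimension is $r(|\sigma|-k_\sigma)$, where $k_\sigma$ is the number of connected components of the graph on the blocks of $\sigma$ having edges $\{\mathrm{block}(i),\mathrm{block}(i+N)\}_{i\in[N]}$.

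The crucial quantitative ingredient is the dimension inequality $\dim X^{(2)}_\sigma < r(|\sigma|-k_\sigma)$. Let $\cP_x,\cP_y$ denote the restrictions of $\sigma$ to $\{1,\ldots,N\}$ and $\{N+1,\ldots,2N\}$, and write $a,b,c$ for the numbers of pure-$x$, pure-$y$ and mixed blocks of $\sigma$. When $c=0$, the inclusion $X^{(2)}_\sigma \subseteq X_{\cP_x}\times X_{\cP_y}$ and the hypothesis give $\dim X^{(2)}_\sigma < \frac{r}{2}(a+b)$, while every block of $\sigma$ contains some element of $[2N]$ and so is incident to at least one edge of the graph, whence $k_\sigma \leq |\sigma|/2 = (a+b)/2$. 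When $c\ge 1$, a mixed block produces a cross-relation $g_{j-N}(y)=g_i(x)$; by injectivity of $g_{j-N}$ this forces $y$ to be a function of $x$, so the projection $(x,y)\mapsto x$ from $X^{(2)}_\sigma$ to $X_{\cP_x}$ is injective. Via a countable sum-type argument applied to a $K_\sigma$ exhaustion, this yields $\dim X^{(2)}_\sigma\leq\dim X_{\cP_x}<\frac{r}{2}(a+c)$, and the same bound $k_\sigma\le |\sigma|/2$ closes the gap.

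I expect the main obstacle to be the ensuing Menger--N\"obeling-type general position statement: for each admissible $\sigma$, the set of $f$ with $f^{|\sigma|}(\Psi_\sigma(X^{(2)}_\sigma))\cap D_\sigma = \emptyset$ is dense in $C(Y,[0,1]^r)$. The subtlety is that $f^{|\sigma|}$ is a \emph{constrained} perturbation---the same $f$ enters every coordinate---rather than a free map $Y^{|\sigma|}\to ([0,1]^r)^{|\sigma|}$. I plan to address this by approximating $f$ in the sup norm using piecewise-linear maps on nerves of fine open covers of $Y$, and then using bump functions supported in disjoint small neighbourhoods of the $|\sigma|$ distinct coordinates of a given point in $Y^{|\sigma|}$ to perturb $f$ independently at each coordinate. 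This reduces the problem to the classical transversality fact that a set of dimension strictly less than $r(|\sigma|-k_\sigma)$ can be pushed off a codimension-$r(|\sigma|-k_\sigma)$ affine subspace of $([0,1]^r)^{|\sigma|}$ by an arbitrarily small generic perturbation. Intersecting the resulting finitely many dense open subsets over admissible $\sigma$ gives density of $U_n$, completing the proof.
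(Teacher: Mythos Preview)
Your overall architecture matches the paper's: reduce to a Baire category statement, stratify $X^\Delta$ by the coincidence partition $\sigma$ (the paper's $\hat\cP$) of $\hat\cF$, split into the intersective and non-intersective cases, and in the intersective case exploit the relation $y=g_{j}^{-1}\circ g_i(x)$ to bound $\dim X^{(2)}_\sigma$ by the dimension of a single factor. Your codimension formula $r(|\sigma|-k_\sigma)$ via the block graph is correct and is a pleasant repackaging of the numerics that the paper reaches by other means.

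There is, however, a real gap in the density step. Your bump-function perturbation is formulated for \emph{a given} point $(x,y)$: the $|\sigma|$ values $z_1,\ldots,z_{|\sigma|}\in Y$ are distinct, so disjoint neighbourhoods exist. But density requires one perturbation working for all $(x,y)$ in a compact piece simultaneously, and as $(x,y)$ ranges over $X^{(2)}_\sigma\cap\{d(x,y)\ge 1/n\}$ the values belonging to different blocks may accumulate on one another in $Y$, so no uniform system of disjoint neighbourhoods exists. The paper resolves this by a further countable decomposition of each stratum into compact ``$\cF$-coherent'' pieces $\hat Z$ on which $g_{i_1}^{(j_1)}(\hat Z)$ and $g_{i_2}^{(j_2)}(\hat Z)$ are \emph{globally} disjoint in $Y$ whenever the indices lie in different blocks of $\hat\cP$; only then can $f$ be modified independently on the $|\sigma|$ image sets. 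Your $K_\sigma$ exhaustion addresses compactness but not this separation.

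Even on a coherent piece, the reduction to ``the classical transversality fact'' is not immediate: perturbations of $f$ induce only \emph{product-type} perturbations of $f^{|\sigma|}\circ\Psi_\sigma$ (the $B$-th output coordinate must factor through the single map $\hat Z\to Y_B$), not arbitrary perturbations of a map $\hat Z\to([0,1]^r)^{|\sigma|}$, so one cannot simply invoke the usual statement about pushing a low-dimensional set off an affine subspace. The paper sidesteps this via a different device: it applies Ostrand's theorem to each projection $Z_j=\pi_j(\hat Z)$ to obtain $rM_j$ families of pairwise-disjoint small closed sets with covering multiplicity exceeding $\tfrac{r}{2}M_j$, pushes these to disjoint families in $Y$ using coherence, and then chooses $\tilde f_\ell$ to take distinct constants on the members of the $\ell$-th family. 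A short pigeonhole argument (on the same block graph you introduce) then shows that every $(x_1,x_2)\in\hat Z$ is separated by some coordinate of $\tilde f^{\cF}$. Your nerve-and-transversality route can likely be completed, but it needs substantially more than the final sentence allots.
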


In addition to proving Theorem \ref{thm:infinite_groups_Takens}, our main auxiliary theorem has an important application related to the celebrated Takens embedding theorem. We now provide the necessary background.

Consider an experimentalist observing a physical system modeled by a $\Z$-system $(X,T)$.
It often happens that what is observed is the values of $k$ \emph{measurements} $h(x), h(Tx), \ldots, h(T^{k-1} x)$, for a real-valued \emph{observable} $h \colon  X \rightarrow \R$. 
One is led to ask, to what extent the original system can be reconstructed from such sequences of measurements (possibly at different initial points) and what is the minimal number $k$ of \emph{delay-coordinates}, required for a reliable reconstruction. This question has been treated in the literature by what today is known as \emph{Takens-type delay embedding theorems}, essentially stating that the reconstruction of $(X,T)$ is possible for certain observables $h$, as long as the measurements $h(x), h(Tx), \ldots, h(T^{k-1} x)$ are known for \emph{all} $x \in X$ and large enough $k$. Indeed  the first result obtained in this area is the Takens delay embedding theorem (\cite{T81}) --  for a compact
manifold $X$ of dimension $d$, it is a generic property (w.r.t.\ Whitney $C^{2}$-topology) for pairs $(h,T)$, where $T:X\rightarrow X$
is a $C^{2}$-diffeomorphism and $h:X\rightarrow\mathbb{{R}}$ a $C^{2}$-function,
that the $(2d+1)$-delay observation map
\begin{align*}
  [h]_{0}^{2d} \colon X & \longrightarrow \mathbb{{R}}^{2d+1} \\
  x & \longmapsto\big(h(x),h(Tx),\ldots,h(T^{2d}x)\big)
\end{align*}
is an embedding. The importance of Takens' result, as evidenced by the great interest it met among mathematical physicists (see e.g. \cite{HBS15, SYC91, PCFS80}),  lies in the fact that the delay-observation framework which it suggested was and still is widely used by experimentalists (see e.g.~\cite{hgls05distinguishing, KY90,sgm90distinguishing,sm90nonlinear}). There have also appeared various mathematical generalization of the theorem \cite{SYC91, N91, 1999delay, CaballeroEmbed, Rob05, Rob11, Gut16, GQS18, StarkEmbedSurvey, StarkStochEmbed, NV18, kato2023takens}. Furthermore recently a probabilistic point of view has been introduced to the theory \cite{BGS20,baranski2022shroer,baranski2024prediction}.

Note Takens considered a setting where $T:X\rightarrow X$ and $h:X\rightarrow X$ are perturbed in order to achieve embedding. The paper \cite{SYC91} introduced a setup where the dynamics is fixed and only the observable is perturbed. Thus in order to achieve embedding, some conditions on periodic points are necessary\footnote{As an example of this phenomenon consider a t.d.s.\ $(G,X)$  where $\dim (G,X)_1>1$  then for all continuous $h:X\rightarrow \R$ it holds that $h_{|(G,X)_1}$ is not injective and therefore $(h^G)_{|(G,X)_1}$ on is not injective, in particular $h^G$ is not an embedding.}. Whereas \cite{SYC91} assumed some regularity conditions both on the dynamics and the observable\footnote{In \cite{SYC91} it is shown
that given an open set $U\subset\mathbb{{R}}^{k}$,  $C^{1}$-diffeomorphism $T:U\rightarrow U$, a compact subset $A\subset U$ with \textit{lower box dimension} $d$, under certain assumptions on
points of low period, \textit{generically} in $h\in C^{1}(U,\mathbb{{R}})$ it holds that $([h]_{0}^{2d})_{|A}$ is a
topological embedding.}, the paper \cite{Gut16} was the first to study the problem of delayed embedding for fixed dynamics in the purely topological setting. The main theorem of \cite{Gut16} implies  that
for a $\Z$-system $(\Z,X)$
with $\dim(X)=d$ and $\dim(\Z,X)_n<\frac{1}{2}n$ for all $n\leq2d$,  
for a generic function $h\in C(X,[0,1])$,  $[h]_{0}^{2d}$ is an embedding.
In \cite{GQS18} a generalization to $\Z^k$-systems was stated (without proof): For a subgroup $A \subset \mathbb{Z}^k$  define $X_A\subset X$ as
   the space of $x\in X$ satisfying $T^a x=x$ for all $a\in A$. Assume a $\Z^k$-system $(\Z^k,X)$ satisfies
$\dim(X)=d$ and 
$\frac{\dim(X_A)}{[\mathbb{Z}^{k}:A]}<\frac{m}{2}$
for every subgroup $A$ of $\mathbb{Z}^{k}$ with
$[\mathbb{Z}^{k}: A]\leq 2d$,
then for a generic function
 $f\in C(X,[0,1]^{m})$ it holds that
\begin{equation*}
f_{2d}:X\to ([0,1]^{m})^{[0,2d]^{k}\cap\mathbb{Z}^{k}},\;\;x\mapsto (f(ix))_{i\in[0,2d]^{k}\cap\mathbb{Z}^{k}}
\end{equation*}
is an embedding. Our last result is a generalization of the two above mentioned results to the context of finitely generated group actions.

\begin{theorem}\label{thm:takens_fg_group}
    Let $G$ be a finitely generated group, let  $S \subseteq G$ be a finite generating set for $G$, and let $r \ge 1$ be a natural number. Let $(G,X)$ be a topological dynamical system   
     with $\dim(X) < + \infty$
    such that for every $N \in \N$ it holds
\[ \dim(G,X)_N < \frac{rN}{2}.\]
Let $S^{\bullet 0} := \{e_G\}$ and given a natural number $n$, let 
\begin{equation}\label{eq:S_bullet}
S^{\bullet n} = \left\{ s_1 \cdot \ldots s_n ~:~ s_1,\ldots,s_n \in S\right\}    
\mbox{ and }
S^{ \le \bullet n} = \bigcup_{k=0}^n S^{\bullet n}
\end{equation}
\noindent
Let $M$ be the smallest natural number which satisfies $M > \frac{2 \dim(X)}{r}$, and let $F= S^{ \le \bullet (M-1)}$.
Then the set of 
continuous functions $f:X\to [0,1]^{r}$ so that
\begin{equation*}
f^{F}:X\to ([0,1]^{r})^{F},\;\;x\mapsto (f(gx))_{g\in F}
\end{equation*}
is an embedding
is comeagre in $C(X,[0,1]^{r})$.
\end{theorem}

\textbf{Structure of the paper:} Section \ref{sec:prelim} contains basic definitions. 
In Subsection \ref{subsec:Proof of the main theorem}, Theorem  \ref{thm:infinite_groups_Takens} is proven assuming Theorem \ref{thm:gen_Takens}. In Subsection \ref{subsec:sharpness}, Theorem  \ref{thm:sharpness} is proven. Section \ref{sec:Takens Embedding Theorem for finitely generated groups} contains the proof of 
Theorem \ref{thm:takens_fg_group}  assuming Theorem \ref{thm:gen_Takens} as well as Example \ref{ex:finitely generated} showing that if the group is not finitely generated then the conclusion of Theorem  \ref{thm:takens_fg_group} does not necessarily hold. Section \ref{sec:auxiliary theorem}, where Theorem \ref{thm:gen_Takens} is proven, is the main technical part of the article. 

\textbf{Acknowledgments:}
Y.G. was partially
supported by the National Science Centre (Poland) grant
2020/39/B/ST1/02329. M.L. was partially  supported by the Israel Science Foundation grant No. 2196/20. T.M. was partially  supported by the Israel Science Foundation grants No. 1052/18 and 985/23.
 We thank the anonymous referees for helpful comments. 

\section{Preliminaries}\label{sec:prelim}

\subsection{Standing notation} For $k \in \N$ denote
$[k] = \{1,\ldots k\}$ . Let $X$ be a set.
Denote 
\[ X^\Delta:= X \times X \setminus \Delta_X, \mbox{ where: }\Delta_X =\{ (x,x) :~ x \in X\} \subseteq X \times X.\]
Let $\pi_1,\pi_2:X\times X \to X$ denote the canonical projections given by $\pi_j(x_1,x_2)=x_j$ for $j \in [2]$.

\subsection{Dimension}\label{subsec:dimension} Let $X$ be a metric space. Let $\alpha$ and $\beta$ be finite open covers of $X$. 
 We say that $\beta$ \textbf{refines} $\alpha$, denoted $\beta \succ \alpha$, if every member of $\beta$ is contained in a member of $\alpha$.
 The \textbf{join} of $\alpha$ and $\beta$ is defined as $\alpha\vee \beta=\{A\cap B|\ A\in \alpha,B\in \beta\}$. Similarly, one may define the join
$\bigvee_{i=1}^{n} \alpha_{i}$ of any finite collection of open covers $\alpha_{i}$, $i=1,\dots, n,$ of $X$.  
Assume $\alpha$ consists of the open sets $U_{1}, U_{2}\ldots, U_{n}$. Define its \textbf{order}
by $\ord(\alpha)= \max_{x\in X} \sum_{U\in \alpha} 1_{U}(x)-1$ and
let $\Dn(\alpha)$ stand for the minimum order with respect to all
covers $\beta$ refining $\alpha$,
i.e.,\ $\Dn(\alpha)= \min_{\beta\succ\alpha} \ord(\beta)$. 
The \textbf{Lebesgue covering dimension} is defined as 
$$\dim(X)= \sup_{\alpha} \Dn(\alpha),$$
where the supremum is over all finite open covers of $X$. In this article dimension always refers to Lebesgue covering dimension.  Note that Lebesgue covering dimension can only take  values in $\N\cup \{0,\infty\}$.

\subsection{Dynamical systems}
A \textbf{topological dynamical system} (\textbf{t.d.s.}) is a pair  $(G,X)$ where $G$ is a group equipped with the discrete topology\footnote{Note that if the action map $G\times X\rightarrow X$ is continuous w.r.t.\ \textit{some} topology on $G$ then it is continuous w.r.t.\ the discrete topology on $G$. Therefore the assumption that $G$ is equipped with the discrete topology is no restriction.}, $(X,d)$ is a compact metric space\footnote{Sometimes we omit $d$ from the notation.} and $G$ acts on $X$ such that the action map $G\times X\rightarrow X$ given by $(g,x)\mapsto gx$ is continuous.  
A t.d.s.\ $(G,X)$ is also referred to as a $G$-\textbf{system} or a $G$-\textbf{(group) action}.

The \textbf{orbit} of $x$ under $G$ is denoted by $G\cdot x:= \{gx : g \in G \}$. For $F\subset G$, similarly denote $F\cdot x:= \{gx : g \in F \}$. The set of \textbf{periodic points} of \textbf{period}  not bigger than $N$ of $(G,X)$  is denoted by
$$(G,X)_N := \left\{ x \in X:~ |G \cdot x| \le N\right\}.$$ 
Define for $x\in X$
$$\Fix_G(x)=\{g\in G|\,gx=x\}.$$
The  \textbf{kernel} of the action $(G,X)$ is the set
\[\Ker(G,X) := \{ g \in G:~ ~\forall x \in X~~ gx =x \}=\bigcap_{x\in X} \Fix_G(x).\]
Note $\Ker(G,X)$ is a normal subgroup of $G$. A t.d.s.\ $(G,X)$ is called \textbf{faithful} if $\Ker(G,X)$ is the trivial subgroup.

A morphism between
two dynamical systems $(G,X)$ and $(G,Y)$ is given by a continuous
mapping $\varphi:X\rightarrow Y$ which is $G$-\textbf{equivariant}
($\varphi(gx)=g\varphi(x)$ for all $x\in X$ and $g\in G$).
If $\varphi$ is an injective morphism, it is called an \textbf{embedding}.

\subsection{Cubical shits and orbit maps}\label{subsec:cube_shift}
For any space $Z$, the group $G$ acts on the space $Z^G$ by $g(y)_h = y_{hg}$ for all $y \in Z^G$ and $g,h \in G$.  For any space $Z$, the group $G$ acts on the space $Z^G$ by $g(y)_h = y_{hg}$ for all $y \in Z^G$ and $g,h \in G$. When $Z=[0,1]^r$, then this action is referred to as a \textbf{$G$-shift}. When $Z=[0,1]^r$, then the system $(([0,1]^{d})^G,\shift)$ is called the ($G-$) \textbf{$r$-cubical shift}. A continuous mapping $f:X\rightarrow[0,1]^{r}$
induces a continuous $G$-equivariant mapping $f^G:(G,X)\rightarrow(G,([0,1]^{r})^G)$
given by $x\mapsto\big(f(g x)\big)_{g\in G}$, known as the \textbf{orbit-map}.

\subsection{Genericity}\label{subsec:Genericity}
We denote the space of continuous functions from $X$ to $[0,1]^r$ by $C(X,[0,1]^r)$. equipped with the topology of uniform convergence. 
By the Baire category theorem (\cite[Theorem 8.4]{K95})
the space $C(X,[0,1]^{r})$, is a \textbf{Baire
space}, i.e., a topological space where any comeagre set is dense. We refer to a property that holds on a comeagre set of $C(X,[0,1]^{r})$ as \textbf{generic}.

\subsection{Partitions}
Let $\cP$ be a partition of set $S$. For every $s \in S$ denote by $\cP(s)$  the unique element $P \in \cP$ such that $s \in P$.
Let $\cP$ and $\tilde \cP$ be two partitions of the same set. One says $\tilde \cP$ is  \textbf{finer}  than $\cP$, $\tilde \cP \succeq \cP $, if for every $\tilde P \in \tilde \cP$, there exists $P \in \cP$ such that $\tilde P \subseteq P$. 

\subsection{Partition compatible subsets}

\begin{definition}\label{def:partition compatible1}
  Let $X,Y$  be compact metrizable spaces and let $\cF= \big(g_1,\ldots,g_N\big)$   be an $N$-tuple  of continuous functions from $X$ to $Y$.
    Suppose $x\in X$. Define an equivalence relation on $\cF$ by 
    $$g_i\sim_x g_j\, \stackrel{\triangle}{\Leftrightarrow} g_i(x)=g_j(x).$$
    We denote by $x_\cF$ the \textbf{$x$-induced partition of $\cF$} the partition of $\cF$ generated by the equivalence classes of $\sim_x$.
    
 Let $\cP $ be a partition of $\cF$. For $W\subset X$ define the \textbf{$(\cP,\cF)$-compatible subset} by 
\begin{equation}\label{eq:Y_P notation}
W_{\cP} = \left\{ x\in W ~:~  x_\cF=\cP \right\}.
\end{equation}

\end{definition}

Definition \ref{def:partition compatible1} has a natural generalization to functions of two variables which we now present.

\begin{definition}
    Let $X,Y$  be compact metrizable spaces and let $g:X \to Y$ be a function. For $j \in [2]$, define the function $g^{(j)}:X \times X \to Y$ by  
    $$g^{(j)}(x_1,x_2) = g(x_j).$$
\end{definition}

\begin{definition}
 Let $X,Y$  be compact metrizable spaces and let $\cF= \big(g_1,\ldots,g_N\big)$  be a finite ordered set of continuous functions from $X$ to $Y$. Define the \textbf{induced} $2N$-tuple of continuous functions from $X\times X$ to $Y$ by
$$\hat \cF= \big(g_1^{(1)},g_1^{(2)},g_2^{(1)},g_2^{(2)},\ldots,g_N^{(1)},g_N^{(2)}\big)$$
 
\end{definition}

Let $\hat \cP$ be a partition of $\hat \cF$ and $\hat W\subset  X^\Delta$. Following \eqref{eq:Y_P notation}, we may define 
 the \textbf{$(\hat \cP,\hat \cF)$-compatible subset} by 
\begin{equation}\label{eq:Y_P notation2}
\hat W_{\hat \cP} = \left\{ (x,y)\in \hat W ~:~  {(x,y)}_{\hat \cF}=\hat \cP \right\}.
\end{equation}

\subsection{Generating sets}\label{subsec:Generating sets}

Given a subset $S$ of a group $G$ and $n\in \mathbb{N}$ we denote by $S^{-1}$ the set of inverses of elements of $S$ and 
\[
S^{\bullet n} = \{ s_1\cdot \ldots \cdot s_n :~ s_1,\ldots,s_n \in S\}.\]
We denote the identity element of $G$ by $e_G$, and  $S^{\bullet 0} := \{e_G\}$.
\begin{definition}

A subset  $S \subseteq G$ is  a generating set for the group $G$ if 
\[ G = \bigcup_{n=1}^\infty \left(S \cup S^{-1} \cup \{e_G\} \right)^{\bullet n}. \]

\end{definition}


\section{Embedding finite-dimensional systems into cubical shifts}\label{sec:Embedding finite-dimensional systems}
\subsection{Proof of the main theorem}\label{subsec:Proof of the main theorem}

\begin{proof}[Proof of \Cref{thm:infinite_groups_Takens}, assuming \Cref{thm:gen_Takens}]
 Note one may assume w.l.o.g.\ that $(G,X)$ is faithful by considering the induced t.d.s.\ $(G/\Ker(G,X),X)$. If $G$ is a finite group, then one may directly apply \Cref{thm:gen_Takens} with $X=Y$ and $\cF= G$ to deduce the conclusion.

Assume $G$ is infinite. Note one may assume w.l.o.g.\
 that $G$ is  countable. Indeed as  $(G,X)$ is faithful, there is an injective group homomorphism $i:G\rightarrow \homeo(X)$, where $\homeo(X)$ is the group of homeomorphisms of $X$, which is Polish when equipped with the supremum (uniform) metric (\cite[Subsection 9.B (8)]{K95}). Thus one may find a dense (w.r.t.\ the supremum metric) subgroup $G' < i(G)$, where $G'$ is countable. In particular  for every $x \in X$, $|G' \cdot x|= |G \cdot x|$,  and so for every $N \in \N$, 
\[(G,X)_N = \{ x \in X: |G' \cdot x| \le N\}.\]
Hence, by possibly replacing $G$ by $G'$, one may assume that $G$ is at most countable.

Choose $N \in \N$ such that $2\dim(X) < rN$.
Let  $\epsilon >0$ and  $F \subset G$ a finite set. A set $S\subset X$ is called \textbf{$\epsilon$-separated} if for every $s_1\neq s_2\in S$, it holds $d(s_1,s_2)\geq \epsilon$. For $K \subset X$ denote by $\sep_\epsilon(K)$  the maximal cardinality of an $\epsilon$-separated set in $K$.
Define
\[
X^{(F,\epsilon)} = \left\{
x \in X:~ G \cdot x = F \cdot x \mbox { or } \sep_\epsilon(F \cdot x) \ge rN
\right\}.
\]

\noindent
Using the definition of $\sep_\epsilon(\cdot)$, it is not difficult to see that $X^{(F,\epsilon)}$ is a closed subset of $X$. 
Recall notation \eqref{eq:Y_P notation}.
We claim that for every partition $\mathcal{P}$ of $F$ it holds that 
\[
\dim(X^{(F,\epsilon)}_{\mathcal{P}}) <\frac{r}{2}|\mathcal{P}|.
\]
Indeed, if $|\mathcal{P} | < r N$ then for every $x \in X^{(F,\epsilon)}_{\mathcal{P}}$ it holds that $G \cdot x = F \cdot x$, as the condition $\sep_\epsilon(F \cdot x) \ge rN$ implies that $|F \cdot x) |\ge rN$ so the cardinality of the partition $x_\cF$ is strictly bigger than $|\cP|$, in particular $x_\cF\neq \cP$.  Thus in this case $X^{(F,\epsilon)}_{\mathcal{P}} \subseteq X_{|\mathcal{P}|}$, and so
\[
\dim(X^{(F,\epsilon)}_{\mathcal{P}})  \le \dim(X_{|\mathcal{P}|}) < \frac{r}{2}|\mathcal{P}|.
\]
Otherwise, $|\mathcal{P}| \ge r N $ and so
\[
\dim(X^{(F,\epsilon)}_{\mathcal{P}})  \le \dim(X) < \frac{r}{2}|\mathcal{P}|.
\]
For any finite set $F \subset G$ and $\epsilon >0$, by applying  \cref{thm:gen_Takens} with $\mathcal{F}=F$, $Y=X$ and $X= X^{(F,\epsilon)}$, we conclude that the function $f^F:X^{(F,\epsilon)} \to ([0,1]^r)^{F}$ is injective for $f\in \C_{(F,\epsilon)}$
a comeagre subset of $C(X,[0,1]^r)$.

Let $(F_n)_{n=1}^\infty$ be an increasing sequence of finite subsets of $G$ such that $\bigcup_{n=1}^\infty F_n = G$. Then $X= \bigcup_{n=1}^\infty \bigcup_{m=1}^\infty X^{(F_n,\frac{1}{m})}$. Conclude that if $f\in \bigcap_{n=1}^\infty \bigcap_{m=1}^\infty \C_{(F_n,\frac{1}{m})}$, which is a comeagre set then it induces a $G$-equivariant topological embedding $f^{G}:X \to ([0,1]^r)^G$.
\end{proof}

\subsection{Sharpness of the main theorem}\label{subsec:sharpness}

\begin{proof}[Proof of Theorem \ref{thm:sharpness}]
Flores (see \cite{F35}, a more accessible source is \cite[Eng78, §1.11H]{engelking1995theory}) proved that $C_n$, the
union of all faces of dimension less than or equal to $n$ of the $(2n+2)$-simplex (the  convex hull of $2n + 3$ points in $\mathbb{R}^{2n+2}$ being affinely independent) does not embed into $R^{2n}$. Note $\dim(C_n) = n$.    Let $Z$ be a compact metric space  with $\dim(Z)= \lceil \frac{rN}{2}\rceil$ such that $Z$  does not embed into $[0,1]^{rN}$.  Let 
    $$X=(Z\times G/G')\stackrel{\circ}{\cup} \{0,1\}^G,$$
    with $G$ acting trivially on $Z$, by multiplication on $G/G'$ and by shift on $\{0,1\}^G$. Clearly $(G,X)$ is faithful. Denote by $eG'$ the coset of $G/G'$ containing the identity of $G$. Denote by $\vec{0}$ the element of $\{0,1\}^G$ consisting only of zeroes.   Note that for any  continuous function $f:X \to [0,1]^r$, the induced map $f^{G}:X \to ([0,1]^r)^G$ restricted to $Z\times \{eG'\}\times \{\vec{0}\} $ is determined by a continuous map $F:Z\rightarrow ([0,1]^r)^{G/G'}\equiv [0,1]^{rN}$. As $Z$  does not embed into $[0,1]^{rN}$, the map $F$ is not injective and as a consequence $f^{G}$ is not injective. Moreover 
$$Z\times G/G'\times \{\vec{0}\}\subset (G,X)_N\subset X.$$
      Thus $ \lceil \frac{rN}{2}\rceil=\dim Z\leq \dim(G,X)_N \leq \dim X= \dim Z= \lceil \frac{rN}{2}\rceil$ which implies $\dim(G,X)_N = \lceil \frac{rN}{2}\rceil$.
\end{proof}

\section{Takens Embedding Theorem for finitely generated groups}\label{sec:Takens Embedding Theorem for finitely generated groups}

\begin{proof}[Proof of \Cref{thm:takens_fg_group}  assuming Theorem \ref{thm:gen_Takens}]
Let $r \ge 1$, $(G,X)$ and $M$ be as in the statement of the theorem.
  Note that by Theorem \ref{thm:gen_Takens} (used for the case $X=Y$) it is enough to show that with $F= S^{ \le \bullet (M-1)}$, for every partition $\cP$ of $F$ it holds 
  \begin{equation}\label{eq:inequality}
  \dim X_{\cP} < \frac{r}{2}|\cP|.    
  \end{equation}
\noindent  
Let $d = \dim X$, and let $\cP$ be any partition of $F$.
Note that if  $|\cP|\ge M$, then 
 $$|\cP|>\frac{2d}{r},$$
 so inequality \eqref{eq:inequality} holds automatically.  
Otherwise, $|\cP| \le M-1$ and in particular $M\geq 2$. Let $x\in X_{\cP}$. From the definition, it follows $|F\cdot x|=|\cP|\leq M -1$. By \Cref{lem:orbit_seg_orbit} below applied to $M-1$ it follows that 
\begin{equation}\label{eq:FxGx}
G\cdot x=F \cdot x.
\end{equation}
\noindent
We thus have
$$
   X_{\cP}\subset (G,X)_{|\cP|}, 
$$
as $(G,X)_{|\cP|}=\{ x \in X:~ |G x| \le |\cP|\}$ and if $x\in X_{\cP}$ then $|G x|=|F x|=|\cP|$. Thus 
$$
\dim X_{\cP} \leq \dim (G,X)_{|\cP|}<\frac{r}{2}|\cP|, 
$$
as desired.

\end{proof}

\begin{lemma}\label{lem:orbit_seg_orbit}
    Let $G$ be a group that acts on a set $X$, and let $S \subseteq G$ be a finite generating set, and  $M \in \mathbb{N}$. 
    Given a natural number $n$, recall the definition of $S^{\le \bullet n}$ by \eqref{eq:S_bullet}, and let $S^{\le \bullet 0}= \{e_G\}$.
    If 
    $x \in X$ and $|S^{\le \bullet M} \cdot x| \le M$ then $S^{\le \bullet (M-1)} \cdot x = G \cdot x$.
 \end{lemma}

\begin{proof}
Since $S^{\le \bullet (n-1)} \subseteq S^{\le \bullet n}$ for every $n$, it follows that 
\[1=|S^{\le \bullet 0} \cdot x| \le |S^{\le \bullet 1} \cdot x| \le \cdots \le |S^{\le \bullet M} \cdot x| \le M.\]
It follows by the pigeonhole principle that there exists $1\le n \le M$ so that
\begin{equation}\label{eq:Fx}
|S^{\le \bullet (n-1)} \cdot x| = |S^{\le \bullet n} \cdot x|.
\end{equation}
As $S^{\le \bullet n}=\bigcup_{s\in S}sS^{\le \bullet (n-1)}$, it follows that for every $s \in S$ it holds
\begin{equation}\label{eq:sF}
s (S^{\le \bullet (n-1)} \cdot x) \subseteq S^{\le \bullet (n-1)} \cdot x.    
\end{equation}
Note that the map $s\cdot:G\rightarrow G$ given by $g \mapsto s\cdot g$ is an injective map for every $s \in S$. By Equation \eqref{eq:sF}, when restricted to $S^{\le \bullet (n-1)} \cdot x$ it is a self-map  of a finite set. Thus by Equation \eqref{eq:Fx},  it follows that $s (S^{\le \bullet (n-1)} \cdot x) = S^{\le \bullet (n-1)} \cdot x$ for all $s \in S$. Hence we also have $s^{-1} (S^{\le \bullet (n-1)} \cdot x) = S^{\le \bullet (n-1)} \cdot x$ for all $s \in S$. From this it follows that $S^{\le \bullet (n-1)} \cdot x$ is a $G$-invariant subset of $X$ that contains $x$, hence it must contain the $G$-orbit $G \cdot x$. Since $S^{\le \bullet (n-1)} \cdot x \subseteq G \cdot x$, it follows that we have equality. Since $S^{\le \bullet (n-1)} \subseteq S^{\le \bullet M-1} \subseteq G$, it holds
\[
G\cdot x= S^{\le \bullet (n-1)}\cdot x \subseteq S^{\le \bullet (M-1)} \cdot x \subseteq G \cdot x,
\]
So $S^{\le \bullet (M-1)} \cdot x= G \cdot x$ as desired. 
\end{proof}

\begin{remark}\label{rem:sharpness}
From Theorem \ref{thm:takens_fg_group} it follows that for any natural numbers $d,r \in \mathbb{N}$ and finitely generated group $G$  there exists an integer $N_{d,r}= N_{d,r}(G)\in \mathbb{N}$ and a finite set $F_{d,r}=F_{d,r}(G) \subseteq G$ of size at most $N_{d,r}$ so that for any topological dynamical system $(G,X)$ with $\dim X \le d$ the set of 
continuous functions $f:X\to [0,1]^{r}$ so that
\begin{equation*}
f^{F_{d,r}}:X\to ([0,1]^{r})^{F_{d,r}},\;\;x\mapsto (f(gx))_{g\in F_{d,r}}
\end{equation*}
is an embedding
is comeagre in $C(X,[0,1]^{r})$. Indeed, \Cref{thm:takens_fg_group} shows that for a group $G$ generated by a finite set $S$, one may take $N_{d,r}(G)=|S^{\le \bullet (M-1)}|$ with 
$M = \lfloor \frac{2d}{r} \rfloor+1$, defined after \eqref{eq:S_bullet}.
When $r=1$, we have that $G$ is a cyclic group so up to group isomorphism either $G= \Z$ or $G= \Z/m \Z$ for some natural number $m$. In this case   \Cref{thm:takens_fg_group} recovers the result of 
\cite{Gut16}, where it was proven that $F=\{0,1,2,\ldots,2d\}$ is sufficient. The lower bound $r \cdot N_{d,r}(G) \ge 2d+1$ holds for any $d,r \in \mathbb{N}$ and any group $G$,  as evidenced by considering a $d$-dimensional space $X$ which does not embed in $\R^{2d}$ (see the proof of Theorem \ref{thm:sharpness} in Subsection \ref{subsec:sharpness}), thus we conclude that the optimal (minimal) value of $N_{d,r}(\Z)$ is $\lfloor \frac{2d}{r}\rfloor$+1.
It is  interesting  to find the minimal value possible for $N_{d,r}(G)$ for other finitely generated groups $G$ beyond $\Z$ . For example for $G=\Z^2$ and $S=\{(1,0),(0,1)\}$, the proof gives 
$N_{d,r}(\Z^2)=\frac{\lfloor \frac{2d}{r}+1 \rfloor \lfloor
\frac{2d}{r}+2 \rfloor}{2}$.
 One wonders how far this is from the minimal value possible. 
\end{remark}

We present an example showing that if the group is not finitely generated then the conclusion of Theorem  \Cref{thm:takens_fg_group} does not necessarily hold. 
\begin{example}\label{ex:finitely generated}
 Let $1=r_3>r_4>r_5\ldots $ be a sequence of positive numbers with $\lim_{i\rightarrow\infty} r_i=0$.
 Let $S_i$ be the circle of radius $r_i$ around the origin in the plane. Let $X$ be the compact (one-dimensional) metric space
$$
X=\bigcup_{i=3}^\infty S_i \cup \{(0,0)\}\subset \mathbb{R}^2.
$$
 Let $G=F_\infty$, the free group with a non finite countable number of generators. Denote $G=\langle g_i\rangle_{i=1}^\infty$. We define the action of $G$ on $X$ by specifying the action of each generator: the element $g_i$ rotates $S_i$ by $2\pi/i$  and acts as identity otherwise. 
 \noindent
 Note that $(G,X)_1=(G,X)_2=\{(0,0)\}$. Note that for $N\geq 3$,  $(G,X)_N$
is a finite union of circles $S_i$ and the origin. Thus for every $N \in \N$ it holds
\[ 1= \dim(G,X)_N < \frac{1}{2}N. \] 
However for every finite $F\subset G$,  for every  continuous functions $f:X\to [0,1]$ the map
\begin{equation*}\label{deff2d}
f^{F}:X\to [0,1]^{F},\;\;x\mapsto (f(gx))_{g\in F}
\end{equation*}
is not an embedding as one may find a circle $S_i$ on which it equals $f_{|S_i}$.
\end{example}

\section{The main auxiliary theorem}\label{sec:auxiliary theorem}
\subsection{Overview of the proof}

Given a set $\hat Z \subseteq X^\Delta$, denote
\begin{equation*}
\cG_\cF(\hat Z) := \{ f \in C(Y,[0,1]^r)~:~ f^{\cF}(x_1) \ne  f^{\cF}(x_2) ~ \forall (x_1,x_2) \in \hat Z\}.
\end{equation*}

\begin{lemma}\label{lem:cG_open}
Let $\hat Z \subset X^\Delta$  be a compact set. Then the set $\cG_\cF(\hat Z) \subset C(Y,[0,1]^r)$ is open.
\end{lemma}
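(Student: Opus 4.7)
The plan is to establish openness directly by showing that every $f \in \cG_\cF(\hat Z)$ admits a sup-norm ball contained in $\cG_\cF(\hat Z)$. The guiding idea is to encode the pointwise non-coincidence condition defining $\cG_\cF(\hat Z)$ as strict positivity of a single continuous scalar function on $\hat Z$, and then to exploit compactness of $\hat Z$ to upgrade pointwise positivity into a uniform positive lower bound.

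First, I would introduce the continuous auxiliary function $\Phi_f \colon X \times X \to [0,\infty)$ given by
\[ \Phi_f(x_1,x_2) = \max_{i \in [N]} \| f(g_i(x_1)) - f(g_i(x_2)) \|, \]
for any fixed norm $\|\cdot\|$ on $\mathbb{R}^r$. Its continuity is immediate from continuity of $f$ and each $g_i$, combined with the fact that the maximum of finitely many continuous functions is continuous. The defining condition of $\cG_\cF(\hat Z)$ then rewrites precisely as the requirement that $\Phi_f > 0$ on $\hat Z$.

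Next, since $\hat Z$ is compact and $\Phi_f$ is continuous, the extreme value theorem would yield a uniform constant $\delta := \min_{\hat Z} \Phi_f > 0$. A routine Lipschitz estimate would close the argument: for any $\tilde f \in C(Y,[0,1]^r)$, the reverse triangle inequality applied coordinatewise together with the elementary bound $|\max_i a_i - \max_i b_i| \le \max_i |a_i - b_i|$ gives $\|\Phi_f - \Phi_{\tilde f}\|_\infty \le 2\|f-\tilde f\|_\infty$. Consequently any $\tilde f$ with $\|\tilde f - f\|_\infty < \delta/4$ satisfies $\Phi_{\tilde f} \ge \delta/2 > 0$ on $\hat Z$, placing $\tilde f$ in $\cG_\cF(\hat Z)$.

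I do not expect any substantive obstacle. The argument is a textbook combination of the extreme value theorem and uniform continuity; the only place where compactness of $\hat Z$ is genuinely used is the passage from pointwise to uniform positivity of $\Phi_f$, and without it the conclusion would fail in general. I would also note in passing that injectivity of the $g_i$ (assumed in \Cref{thm:gen_Takens}) plays no role in this lemma.
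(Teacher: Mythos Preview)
Your argument is correct and is essentially the same as the paper's: both fix $f\in\cG_\cF(\hat Z)$, use compactness of $\hat Z$ to obtain a uniform lower bound $\delta>0$ on the distance between $f^\cF(x_1)$ and $f^\cF(x_2)$, and then a triangle-inequality estimate to show a sup-norm ball around $f$ stays in $\cG_\cF(\hat Z)$. The only difference is cosmetic---you package the distance as the auxiliary function $\Phi_f$ and obtain radius $\delta/4$, whereas the paper argues directly and gets $\delta/2$.
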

\begin{proof}
Fix $f \in \cG_\cF(\hat Z)$. Since $\hat Z$ is compact, it follows that there exists $\epsilon >0$ such that for all $(x_1,x_2) \in \hat Z$, the distance between  $f^{\cF}(x_1)$ and $f^{\cF}(x_2)$ is at least $\epsilon$. It follows that the ball of radius $\epsilon/2$ around $f$ in $C(Y,[0,1]^r)$ is contained in $\cG_\cF(\hat Z)$. Q.E.D.
\end{proof}

\begin{lemma}\label{lem:countable_cover_baire}(cf. \cite[Lemma 3.3]{Gut16} 
Suppose there exists a countable collection of compact sets $\hat Z_1,\ldots, \hat Z_n ,\ldots \subset \hat {X}$, such that $\hat{X} = \bigcup_{n=1}^\infty \hat Z_n$ and so that for every $n \in \mathbb{N}$ the set $\cG_\cF(\hat Z_n)$ is dense.
Then 
the set of continuous functions  $f \in C(Y,[0,1]^{r})$ for which $f^\cF:X \to ([0,1]^{r})^\cF$ is injective is a dense $G_\delta$ subset of $C(Y,[0,1]^r)$.
\end{lemma}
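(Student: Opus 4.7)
The plan is a direct application of the Baire category theorem, using \Cref{lem:cG_open} to upgrade denseness to openness-and-denseness, together with a set-theoretic identification of the injective functions as an intersection of the sets $\cG_\cF(\hat Z_n)$.

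First I would observe that $C(Y,[0,1]^r)$, equipped with the supremum metric, is a Polish space and therefore a Baire space, so any countable intersection of open dense subsets is a dense $G_\delta$. By \Cref{lem:cG_open}, each of the sets $\cG_\cF(\hat Z_n)$ is open in $C(Y,[0,1]^r)$, since $\hat Z_n$ is compact; by hypothesis each is dense. Hence
\[
\cG_\cF := \bigcap_{n=1}^\infty \cG_\cF(\hat Z_n)
\]
is a dense $G_\delta$ subset of $C(Y,[0,1]^r)$.

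Next I would verify that $\cG_\cF$ coincides with the set of $f \in C(Y,[0,1]^r)$ for which $f^\cF : X \to ([0,1]^r)^N$ is injective. By definition, $f^\cF$ is injective precisely when $f^\cF(x_1) \ne f^\cF(x_2)$ for every $(x_1,x_2) \in X^\Delta = \hat X$. Using the covering $\hat X = \bigcup_{n=1}^\infty \hat Z_n$, this condition is equivalent to $f^\cF(x_1) \ne f^\cF(x_2)$ for every $(x_1,x_2) \in \hat Z_n$ and every $n$, i.e.\ to $f \in \cG_\cF(\hat Z_n)$ for every $n$. Therefore $\cG_\cF$ is exactly the set of $f$ for which $f^\cF$ is injective, completing the proof.

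There is no genuine obstacle here: both the openness of each $\cG_\cF(\hat Z_n)$ and the identification of $\cG_\cF$ with the injectivity locus are immediate. The only subtlety worth flagging is that the covering hypothesis is applied to $\hat X = X^\Delta$ rather than to $X \times X$; this is precisely what is needed, since the diagonal contributes no injectivity constraint. All the real work in the proof of \Cref{thm:gen_Takens} will be to produce the countable family $\{\hat Z_n\}$ with $\cG_\cF(\hat Z_n)$ dense for each $n$, which this lemma then assembles into the desired generic statement.
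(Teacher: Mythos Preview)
Your proof is correct and follows essentially the same approach as the paper's: identify the injectivity locus with $\bigcap_{n} \cG_\cF(\hat Z_n)$ via the covering $\hat X = \bigcup_n \hat Z_n$, invoke \Cref{lem:cG_open} for openness, and conclude by the Baire category theorem. The only difference is the order of presentation.
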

\begin{proof}
Given  $f \in C(Y,[0,1]^r)$ observe that $f^{\cF}:X \to ([0,1]^r)^{\cF}$ is injective if and only if $f^{\cF}(x_1) \ne f^{\cF}(x_2)$ for every $(x_1,x_2) \in X^\Delta$,
Since $X^\Delta= \bigcup_{n=1}^\infty \hat Z_n$ it follows that
\[ \left\{ f \in C(Y,[0,1]^r):~ f^{\cF} \mbox { is injective } \right\} = \bigcap_{n=1}^\infty \cG_\cF(\hat Z_n).\]
By assumption $\cG_\cF(\hat Z_n)$ is dense for every $n$. By \Cref{lem:cG_open}$, \cG_\cF(\hat Z_n)$ is open.  By the Baire category theorem, it follows that $\bigcap_{n=1}^\infty \cG_\cF(\hat Z_n)$ is a dense $G_\delta$ subset of $C(Y,[0,1]^r)$.
\end{proof}

Given \Cref{lem:countable_cover_baire}, to conclude the proof of  \Cref{thm:gen_Takens}, it suffices to find a countable cover of  $X^\Delta$ by compact sets $\hat Z_1,\hat Z_2,\ldots \subset X^\Delta $, so that for every $n \in \mathbb{N}$ the set $\cG_\cF(\hat Z_n)$ is dense. 


\subsection{Coherent sets}
Let $(X,d),(Y,d')$  be compact metric spaces and let $\cF= \big(g_1,\ldots,g_N\big)$   be a finite ordered set of continuous injective functions from $X$ to $Y$. Observe that $X^\Delta = \bigcup_{\hat \cP}X^\Delta_{\hat \cP}$, where the union is a finite union over partitions of $\hat \cF$.

\begin{definition}

A  set $\hat Z \subseteq X^\Delta$ is said to be  $\cF$-\textbf{coherent} if there exists a partition $\hat \cF$  such that $\hat Z \subseteq X^\Delta_{\hat \cP}$ and so that for every $(i_1,j_1),(i_2,j_2) \in [N] \times [2]$ such that $\hat \cP(g_{i_1}^{(j_1)}) \ne \hat \cP(g_{i_2}^{(j_2)})$ it holds 
$g_{i_1}^{(j_1)}(\hat Z) \cap g_{i_2}^{(j_2)}(\hat Z) = \emptyset$. 
\end{definition}
\begin{lemma}\label{lem:hat_X_cP_sigma_compact}
Let $\hat \cP$ be a partition  of  $\hat \cF$ 
Then the set $X^\Delta_{\hat \cP}$ is a countable union of compact $\cF$-coherent sets.

\end{lemma}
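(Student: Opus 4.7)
The plan is to cover $X^\Delta_{\hat\cP}$ by a countable family of compact $\cF$-coherent sets, obtained by discretizing the values of the functions $g_i$ at a sequence of finer and finer scales.

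A quick preliminary: if every pair $g_i^{(1)}, g_i^{(2)}$ lies in the same block of $\hat\cP$, then for $(x_1,x_2)\in X^\Delta_{\hat\cP}$ one would have $g_i(x_1)=g_i(x_2)$ for every $i$, and injectivity of the $g_i$'s would force $x_1=x_2$, contradicting $(x_1,x_2)\in X^\Delta$; so $X^\Delta_{\hat\cP}=\emptyset$ and the statement is vacuous. Henceforth I assume there is at least one index $i_0$ with $\hat\cP(g_{i_0}^{(1)}) \neq \hat\cP(g_{i_0}^{(2)})$.

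For each $n \in \N$, cover $Y$ by finitely many closed balls $D_1^{(n)},\ldots,D_{m_n}^{(n)}$ of radius $1/n$, and for each map $\sigma: \hat\cP \to [m_n]$ whose image balls are pairwise disjoint, let $Z^{(n)}_\sigma \subseteq X \times X$ be defined by the closed conditions $g_i(x_j) \in D_{\sigma(\hat\cP(g_i^{(j)}))}^{(n)}$ for every $(i,j) \in [N]\times[2]$, together with $g_{i_1}(x_{j_1})=g_{i_2}(x_{j_2})$ whenever $\hat\cP(g_{i_1}^{(j_1)})=\hat\cP(g_{i_2}^{(j_2)})$. Each $Z^{(n)}_\sigma$ is closed in the compact space $X\times X$, hence compact, and the family $\{Z^{(n)}_\sigma\}_{n,\sigma}$ is countable.

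Then I would verify three properties. \emph{First,} $Z^{(n)}_\sigma \subseteq X^\Delta_{\hat\cP}$: the ball memberships and pairwise ball disjointness force $g_{i_1}(x_{j_1})\neq g_{i_2}(x_{j_2})$ across different blocks, which combined with the explicit in-block equalities yields $(x_1,x_2)_{\hat\cF}=\hat\cP$; applied to the index $i_0$ this also gives $x_1\neq x_2$. \emph{Second,} each $Z^{(n)}_\sigma$ is $\cF$-coherent because the image $g_i^{(j)}(Z^{(n)}_\sigma)$ is confined to the specific ball $D_{\sigma(\hat\cP(g_i^{(j)}))}^{(n)}$, and these balls are pairwise disjoint across different blocks. \emph{Third,} every $(x_1,x_2)\in X^\Delta_{\hat\cP}$ lies in some $Z^{(n)}_\sigma$: the $|\hat\cP|$ values $y_P:=g_i(x_j)$ for $(i,j)\in P$ are pairwise distinct, and once $1/n$ is smaller than a quarter of $\min_{P\neq P'} d'(y_P,y_{P'})$, any choice of $\sigma(P)$ with $y_P\in D_{\sigma(P)}^{(n)}$ yields pairwise disjoint balls by the triangle inequality.

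The main subtle point, I expect, is that $X^\Delta_{\hat\cP}$ is only locally closed — it is carved out by closed in-block equalities together with open off-block inequalities and the open condition $x_1\neq x_2$ — so it cannot be written as a single compact set. The role of the ball discretization is precisely to replace each off-block strict inequality by the closed condition ``the two values lie in a prescribed pair of disjoint balls'', which simultaneously provides the required compactness and delivers $\cF$-coherence almost for free.
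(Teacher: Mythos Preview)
Your proof is correct, and it takes a genuinely different route from the paper's. The paper proceeds in two stages: first it shows that $X^\Delta_{\hat\cP}$ is $\sigma$-compact by exhausting the open conditions $d(x_1,x_2)>0$ and $d'(g_{i_1}(x_{j_1}),g_{i_2}(x_{j_2}))>0$ via the closed thresholds $\ge 1/n$; then, separately, it constructs $\cF$-coherent relatively open neighborhoods around each point and invokes the hereditary Lindel\"of property to extract a countable subcover, finally intersecting the two families. Your approach collapses both steps into one: by fixing a finite cover of $Y$ by closed $1/n$-balls and indexing over maps $\sigma:\hat\cP\to[m_n]$ with pairwise disjoint image balls, you simultaneously obtain closed (hence compact) sets and the image-disjointness required for $\cF$-coherence. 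This is cleaner and avoids the Lindel\"of detour; the paper's version, on the other hand, makes the local nature of coherence more explicit and separates the purely topological $\sigma$-compactness from the dynamical coherence condition.
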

\begin{proof}
For $(i_1,j_1), (i_2,j_2) \in [N] \times [2]$, denote
\[
X^{\Delta,=}_{(i_1,j_1), (i_2,j_2)}  = \left\{ (x_1,x_2) \in X^\Delta:~  g_{i_1}(x_{j_1})= g_{i_2}(x_{j_2})\right\},
\]

\[
X^{\Delta,\neq}_{(i_1,j_1), (i_2,j_2)}= \left\{ (x_1,x_2) \in X^\Delta:~ g_{i_1}(x_{j_1}) \ne g_{i_2}(x_{j_2})\right\}.
\]

Given a partition $\hat \cP$ of $[N] \times [2]$ denote:
\[
I^{=} = \left\{
\left((i_1,j_1),(i_2,j_2)\right) \in \left([N] \times [2]\right)^2: 
\cP(i_1,j_1) =  \cP(i_2,j_2)
\right\}.
\]
and
\[
I^{\ne} =\left\{
\left((i_1,j_1),(i_2,j_2)\right) \in \left([N] \times [2]\right)^2: 
\cP(i_1,j_1) \ne \cP(i_2,j_2)
\right\}
\]
Note

\[ X^{\Delta}_{\hat \cP} =  \big( \bigcap_{ (i_1,j_1),(i_2,j_2) \in I^{=}} X^{\Delta,=}_{(i_1,j_1), (i_2,j_2)}\big) \bigcap \big( \bigcap_{ (i_1,j_1),(i_2,j_2) \in I^{\ne}} X^{\Delta,\neq}_{(i_1,j_1), (i_2,j_2)}\big).
\]
Define for $n \in \mathbb{N}$ and  $(i_1,j_1), (i_2,j_2) \in [N] \times [2]$  

\[
X^{\Delta,n}_{(i_1,j_1), (i_2,j_2)}  = \left\{ (x_1,x_2) \in X^\Delta:~ d(x_1,x_2) \ge \frac{1}{n} \mbox{ and } g_{i_1}(x_{j_1})= g_{i_2}(x_{j_2})\right\},
\]
and

\[
X^{\Delta,+n}_{(i_1,j_1), (i_2,j_2)}  = \left\{ (x_1,x_2) \in X^\Delta:~ d(x_1,x_2) \ge \frac{1}{n} \mbox{ and } d(g_{i_1}(x_{j_1}) , g_{i_2}(x_{j_2})) \ge \frac{1}{n} \right\}.
\]
Note that each of the sets $X^{\Delta,n}_{(i_1,j_1), (i_2,j_2)}$  and $X^{\Delta,+n}_{(i_1,j_1), (i_2,j_2)}$ is compact.
Also
\[ X_{(i_1,j_1), (i_2,j_2)}^{\ne} = \bigcup_{n=1}^\infty   X_{(i_1,j_1), (i_2,j_2)}^{+n} \mbox{ and } X_{(i_1,j_1), (i_2,j_2)}^{=} = \bigcup_{n=1}^\infty   X_{(i_1,j_1), (i_2,j_2)}^{n}.\]
Thus $X^\Delta_{\hat \cP}$ is a countable union of compact sets. We write:
$$X^\Delta_{\hat \cP}=\bigcup_{i=1}^\infty \hat K_i.$$

Next, we  find for every $(x,y) \in X^\Delta_{\hat \cP}$ a relatively open set $(x,y)\in U_{(x,y)}\subset X^\Delta_{\hat \cP}$ such that $\overline{U}_{(x,y)}\cap X^\Delta_{\hat \cP}$ is $\cF$-coherent.
As $X\times X$ is a \textit{hereditarily Lindelöf space}\footnote{A \textbf{Lindelöf space} is a topological space in which every open cover has a countable subcover. A \textbf{hereditarily Lindelöf space} is a topological space such that every subspace of it is Lindelöf.}, $X^\Delta_{\hat \cP}$ is a Lindelöf space and therefore one may find a countable open subcover consisting of such sets $X^\Delta_{\hat \cP}=\bigcup_{j=1}^\infty U_{(x_j,y_j)}.$
As a subset of an $\cF$-coherent set is also an $\cF$-coherent set, one concludes that

$$X^\Delta_{\hat \cP}=\bigcup_{i=1}^\infty \bigcup_{j=1}^\infty (\hat K_i\cap \overline{U}_{(x_j,y_j)})$$
\noindent
is a countable union of
compact $\cF$-coherent sets.

Finally let us construct $U_{(x_1,x_2)}$ for a given $(x_1,x_2)\in X^\Delta_{\hat \cP}$. Indeed, let $\epsilon>0$ be smaller than the minimal distance between two distinct elements of $\{g_i(x_j): ~i \in [N],~ j \in [2]\}$. By the continuity of the $G$-action, one may find a relatively open set $U_{(x_1,x_2)}\subset X^\Delta_{\hat \cP}$ with $\mathbf{x}=(x_1,x_2)\in U_{(x_1,x_2)}$ such that  for every $\mathbf{z}=(z_1,z_2) \in U_{(x_1,x_2)}$ 
and every $i \in [N]$ and $j \in [2]$,
$$d'(g_i^{(j)}(\mathbf{x}),g_i^{(j)}(\mathbf{z})) \le \frac{\epsilon}{3}.$$ 
By the triangle inequality, for every $\mathbf{z}=(z_1,z_2),\mathbf{\tilde z}=(\tilde z_1,\tilde z_2)  \in \overline{U}_{(x_1,x_2)}$ and every $(i_1,j_1),(i_2,j_2) \in [N] \times [2]$ such that $\hat \cP(i_1,j_1) \ne \hat \cP(i_2,j_2)$ it holds
\[d'(g_{i_1}^{(j_1)}(\mathbf{z}),g_{i_2}^{(j_2)}(\mathbf{\tilde z})) \ge d'(g_{i_1}^{(j_1)}(\mathbf{x}), g_{i_2}^{(j_2)}(\mathbf{x})) - d'(g_{i_1}^{(j_1)}(\mathbf{x}),g_{i_1}^{(j_1)}(\mathbf{z}))-d'(g_{i_2}^{(j_2)}(\mathbf{x}),g_{i_2}^{(j_2)}(\mathbf{\tilde z})),\]
which implies
\[d'(g_{i_1}^{(j_1)}(\mathbf{z}),g_{i_2}^{(j_2)}(\mathbf{\tilde z})) \ge \frac{1}{3}\epsilon.\]
Thus $\overline{U}_{(x_1,x_2)}\cap X^\Delta_{\hat \cP}$  is $\cF$-coherent
\end{proof}

\subsection{Main auxiliary lemma}

In view of \Cref{lem:countable_cover_baire} and \Cref{lem:hat_X_cP_sigma_compact}, in order to prove  \Cref{thm:gen_Takens} it is enough to prove the following lemma. 

\begin{lemma}\label{lem:cG_dense}
Let $(X,d)$, $(Y,d')$  be compact metrizable spaces and let $\cF= \big(g_1,\ldots,g_N\big )$   be a finite ordered set of continuous injective functions from $X$ to $Y$.
Assume that for every partition $\cP$ of $\cF$ the inequality \eqref{eq:dim_inequality} holds.
 Let $\hat \cP$ be a partition  of  $\hat \cF$ and let $\hat Z \subseteq X^\Delta_{\hat \cP}$ be a compact $\cF$-coherent set. 
Then $\cG_\cF(\hat Z_n)$ is a dense subset of $C(Y,[0,1]^r)$.
\end{lemma}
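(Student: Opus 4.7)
The plan is to reformulate the problem as a product-space transversality question and then apply a Menger--N\"obeling-style density argument driven by the dimension hypothesis. Write $k = |\hat\cP|$ and enumerate the blocks $B_1, \ldots, B_k$ of $\hat\cP$. By $\cF$-coherence the images $Y_m := g_i^{(j)}(\hat Z)$, well-defined for any $(i,j) \in B_m$, are pairwise disjoint compact subsets of $Y$. I would first show that the map $\phi : \hat Z \to Y_1 \times \cdots \times Y_k$ sending $(x_1, x_2)$ to the tuple of these common values is a continuous injection, hence a homeomorphism onto $Z := \phi(\hat Z)$: since every $(i,1)$ and every $(i,2)$ belongs to some block and each $g_i$ is injective, both $x_1$ and $x_2$ are recoverable from $\phi(x_1, x_2)$. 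Writing $\alpha(i), \beta(i) \in [k]$ for the blocks containing $(i,1)$ and $(i,2)$, the condition $f \in \cG_\cF(\hat Z)$ rephrases as: for every $(y_1, \ldots, y_k) \in Z$, the tuple $(f(y_1), \ldots, f(y_k))$ avoids the linear subspace $\Delta^\sim \subseteq ([0,1]^r)^k$ of tuples constant within equivalence classes of the relation $\sim$ on $[k]$ generated by the pairs $\{(\alpha(i), \beta(i)) : i \in [N]\}$. With $\ell = |[k]/\sim|$, this $\Delta^\sim$ has codimension $r(k-\ell)$.

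The heart of the argument will be the dimension bound $\dim Z < r(k-\ell)$. The key observation is that no $\sim$-class can be a singleton: if some block $B_m$ contained both $(i,1)$ and $(i,2)$ for the same $i$, injectivity of $g_i$ would force $x_1 = x_2$ on $\hat Z$, contradicting $\hat Z \subseteq X^\Delta$. Hence no block is closed under the swap $(i,j) \mapsto (i, 3-j)$, every vertex of the graph generating $\sim$ has positive degree, and every $\sim$-class has at least two elements, so $\ell \le k/2$ and $k - \ell \ge k/2$. Let $k_1, k_2, k_{12}$ count blocks containing only first-, only second-, or both types of indices. The induced partitions $\cP^{(j)}$ of $\cF$, defined by $g_i \sim g_{i'}$ iff $(i,j), (i',j)$ lie in the same block of $\hat\cP$, satisfy $|\cP^{(j)}| = k_j + k_{12}$ and $\pi_j(\hat Z) \subseteq X_{\cP^{(j)}}$, giving $\dim \pi_j(\hat Z) < \tfrac{r}{2}(k_j + k_{12})$ by hypothesis. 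When $k_{12} \ge 1$, choosing $(i,1), (i',2)$ in a mixed block yields $x_2 = g_{i'}^{-1}(g_i(x_1))$, so $\pi_1$ is injective on $\hat Z$ and $\dim \hat Z < \tfrac{r}{2}\min(|\cP^{(1)}|, |\cP^{(2)}|)$; combined with $k - \ell \ge k/2$ a short arithmetic check yields the estimate. When $k_{12} = 0$, the product bound $\dim \hat Z \le \dim \pi_1(\hat Z) + \dim \pi_2(\hat Z) < \tfrac{r}{2}k \le r(k-\ell)$ suffices directly.

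The density step will then be a transversality argument by constant perturbations. Given $f_0 \in C(Y, [0,1]^r)$ and $\epsilon > 0$, I would first approximate $f_0$ so that its image lies in $(0,1)^r$ (making room for small $\mathbb R^r$-perturbations). Fix a linear surjection $D : \mathbb R^{rk} \twoheadrightarrow \mathbb R^{r(k-\ell)}$ with kernel $\Delta^\sim$ and set $\Psi_0(y_1, \ldots, y_k) := D(f_0(y_1), \ldots, f_0(y_k))$. Since $\dim \Psi_0(Z) \le \dim Z < r(k-\ell)$, the image $\Psi_0(Z)$ has empty interior, so I can pick $v$ of arbitrarily small norm with $v \notin -\Psi_0(Z)$. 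Lifting $v$ to a small $(c_1, \ldots, c_k) \in (\mathbb R^r)^k$ with $D(c_1, \ldots, c_k) = v$ (set $c_m = 0$ on a set of class representatives and $c_m$ equal to the corresponding component of $v$ otherwise), then extending via Tietze on the disjoint compacts $Y_1, \ldots, Y_k$ to a small $c \in C(Y, \mathbb R^r)$ with $c|_{Y_m} \equiv c_m$, the function $f := f_0 + c \in C(Y, [0,1]^r)$ is close to $f_0$ and satisfies $D(f(y_1), \ldots, f(y_k)) = \Psi_0(y_1, \ldots, y_k) + v \ne 0$ for every $(y_1, \ldots, y_k) \in Z$, so $f \in \cG_\cF(\hat Z)$. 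The principal obstacle I anticipate is the dimension estimate in the middle paragraph, specifically the recognition that the no-singleton observation (which genuinely uses both the injectivity of the $g_i$'s and $\hat Z \subseteq X^\Delta$) is exactly what is needed to combine the two hypothesis bounds on $X_{\cP^{(1)}}$ and $X_{\cP^{(2)}}$ into the single target estimate $\dim Z < r(k-\ell)$; the other steps are routine applications of Tietze extension and classical dimension-theoretic density.
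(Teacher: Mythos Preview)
Your approach is correct and genuinely different from the paper's. The paper never bounds $\dim\hat Z$ directly; instead it applies Ostrand's theorem separately to each projection $Z_j=\pi_j(\hat Z)$, using only $\dim Z_j<\frac{r}{2}M_j$, to obtain $rM_j$ families of pairwise disjoint small closed sets so that every point of $Z_j$ is covered more than $\frac{r}{2}M_j$ times. These families are pushed into $Y$ via the $\tilde g_{t,j}$ (coherence keeps them disjoint), $f$ is perturbed to take distinct constant values on the resulting cells (\Cref{lem:cont_approx_disjoint_open_covers}), and a bespoke pigeonhole argument (\Cref{lem:three_part_graph} and \Cref{lem:comb_no_intersection}) shows that for every $(x_1,x_2)\in\hat Z$ some coordinate of $\tilde f^{\cF}$ separates $x_1$ from $x_2$. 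Your route replaces Ostrand's theorem and the combinatorial lemmas with a single transversality step: embed $\hat Z$ into $\prod_m Y_m$, identify the obstruction as a linear subspace $\Delta^\sim$ of codimension $r(k-\ell)$, establish $\dim\hat Z<r(k-\ell)$ from the hypothesis together with the no-singleton observation (which is exactly the content of \Cref{lem:intersective_partition}(1)), and perturb by locally constant functions on the disjoint $Y_m$. What the paper's argument buys is that it uses only the individual bounds on $\dim X_{\cP^{(j)}}$ and never the product inequality $\dim(A\times B)\le\dim A+\dim B$; what your argument buys is a considerably shorter and more conceptual proof that avoids Ostrand's theorem entirely, at the cost of invoking the product inequality in the non-intersective case. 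Both arguments make the same intersective/non-intersective case split and rely on the same key fact that $\alpha(i)\ne\beta(i)$ for all $i$.
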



To  prove \Cref{lem:cG_dense} we distinguish between two different types of partitions $\hat \cP$ as follows:



\begin{definition}\label{def:intersective_partition}
A partition $\hat \cP$ of  $[N] \times [2]$ is said to be  \textbf{intersective} if there exists $i_1,i_2 \in [N]$ such that $\hat \cP (i_1,1)= \hat \cP(i_2,2)$. Otherwise, $\hat \cP$ is said to be \textbf{non-intersective}. 
\end{definition}

\begin{lemma}\label{lem:intersective_partition}
Let $\hat \cP$ be  a   partition of $\hat \cF$ identified with  $[N] \times [2]$  such that  $X^\Delta_{\hat \cP} \ne \emptyset$. Then:
\begin{enumerate}
\item For every $i \in [N]$ it holds that $\hat \cP(i,1) \ne \hat \cP(i,2)$.
\item If $\hat \cP$ is an  intersective partition, then there exists a homeomorphism $T:\pi_1(X^\Delta_{\hat \cP}) \to \pi_2(X^\Delta_{\hat \cP})$ such that  for every $i_1\neq i_2 \in [N] $ satisfying $\hat \cP(i_1,1)=\hat \cP(i_2,2)$ and $x \in \pi_1(X^\Delta_{\hat \cP})$, it holds $g_{i_1}(x)= g_{i_2}(T(x))$.
\end{enumerate}
\end{lemma}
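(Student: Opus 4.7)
\textbf{Proof plan for \Cref{lem:intersective_partition}.}

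For part (1), my plan is entirely immediate: I would take any $(x_1,x_2)\in X^\Delta_{\hat\cP}$; then $x_1\ne x_2$, and injectivity of $g_i$ yields $g_i^{(1)}(x_1,x_2)=g_i(x_1)\ne g_i(x_2)=g_i^{(2)}(x_1,x_2)$, so $g_i^{(1)}$ and $g_i^{(2)}$ lie in distinct blocks of the induced partition $(x_1,x_2)_{\hat\cF}$. Since $(x_1,x_2)_{\hat\cF}=\hat\cP$ by definition of $X^\Delta_{\hat\cP}$, this forces $\hat\cP(i,1)\ne\hat\cP(i,2)$.

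For part (2), the key observation I would exploit is that each $g_i\colon X\to Y$, being a continuous injection from the compact space $X$ into a Hausdorff space, is a homeomorphism onto its image; in particular $g_i^{-1}$ is well-defined and continuous on $g_i(X)\subseteq Y$. I would fix any witnessing pair $(i_1,i_2)$ with $\hat\cP(i_1,1)=\hat\cP(i_2,2)$ (necessarily $i_1\ne i_2$ by part (1)) and set
\[
T(x):=g_{i_2}^{-1}\bigl(g_{i_1}(x)\bigr),\quad x\in\pi_1(X^\Delta_{\hat\cP}).
\]
The next step is to show that for any $x_1\in\pi_1(X^\Delta_{\hat\cP})$ with partner $(x_1,x_2)\in X^\Delta_{\hat\cP}$, the block equality $\hat\cP(i_1,1)=\hat\cP(i_2,2)$ translates to $g_{i_1}(x_1)=g_{i_2}(x_2)$; this both puts $g_{i_1}(x_1)$ in the domain of $g_{i_2}^{-1}$ and identifies $T(x_1)$ with $x_2$, so $T$ in fact maps $\pi_1(X^\Delta_{\hat\cP})$ onto $\pi_2(X^\Delta_{\hat\cP})$ by sending a first coordinate to its second. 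Independence of the chosen pair $(i_1,i_2)$ then comes for free: any other pair $(i_1',i_2')$ with $\hat\cP(i_1',1)=\hat\cP(i_2',2)$ forces $g_{i_1'}(x_1)=g_{i_2'}(x_2)=g_{i_2'}(T(x_1))$ by the same partition reasoning, giving the relation required in the statement. The candidate inverse $y\mapsto g_{i_1}^{-1}(g_{i_2}(y))$ on $\pi_2(X^\Delta_{\hat\cP})$ is handled symmetrically, so $T$ is a continuous bijection with continuous inverse, hence a homeomorphism.

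The only point of substance I expect to verify is that $T$ is genuinely a function, i.e.\ the partner $x_2$ is unique given $x_1$. This is where I would invoke injectivity of $g_{i_2}$ once more: if $(x_1,x_2),(x_1,x_2')\in X^\Delta_{\hat\cP}$ then $g_{i_2}(x_2)=g_{i_1}(x_1)=g_{i_2}(x_2')$, so $x_2=x_2'$. Accordingly I do not anticipate any real obstacle; the argument is essentially a bookkeeping exercise converting block equalities in $\hat\cP$ into equalities of function values, with all the topological content packed into the elementary fact that a continuous injection on a compact space is a topological embedding.
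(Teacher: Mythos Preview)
Your proposal is correct and follows essentially the same route as the paper: both define $T:=g_{i_2}^{-1}\circ g_{i_1}$ restricted to $\pi_1(X^\Delta_{\hat\cP})$ using that each $g_i$ is a homeomorphism onto its image, then verify the relation $g_{i_1'}(x_1)=g_{i_2'}(T(x_1))$ for any other admissible pair via $T(x_1)=x_2$. Your extra care in checking uniqueness of the partner $x_2$ and in writing down the inverse explicitly is harmless (indeed, once $T$ is given by the formula the uniqueness check is not strictly needed), and part~(1) is identical to the paper's argument.
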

\begin{proof}
\begin{enumerate}
\item Suppose that for some $i \in [N]$ it holds that $\hat \cP(i,1) = \hat \cP(i,2)$. As $X^\Delta_{\hat \cP} \ne \emptyset$, one may find $ (x_1,x_2) \in X^\Delta_{\hat \cP}$. Then $x_1 \ne x_2$ and $g_{i}(x_1) = g_i(x_2)$, contradicting the injectivity of $g_i$.

\item Let $\hat \cP$ be an intersective partition, and let   $i_1\neq i_2 \in [N]$  with   $\hat \cP(i_1,1) = \hat \cP(i_2,2)$. Then in particular $g_{i_1}(\pi_1(X^\Delta_{\hat \cP}))=g_{i_2}(\pi_2(X^\Delta_{\hat \cP}))$. Since $g_{i_1},g_{i_2}$ are injective continuous functions on $X$, they induce hoemomorphisms between  $X$ and $g_{i_1}(X),g_{i_2}(X) \subseteq Y$ respectively. Then $T:= g_{i_2}^{-1}\circ g_{i_1}\mid_{\pi_1(X^\Delta_{\hat \cP})}$ is a homeomorphism between $\pi_1(X^\Delta_{\hat \cP})$ and $\pi_2(X^\Delta_{\hat \cP})$. Now suppose  $i_1',i_2' \in [N]$ also satisfy  $\hat \cP(i_1',1)=\hat \cP(i_2',2)$.  
Choose any $x_1 \in \pi_1(X^\Delta_{\hat \cP})$. 
Our goal is to show that  $g_{i_1'}(x_1)= g_{i_2'}(T(x_1))$.
By definition,  $x_1 \in \pi_1(X^\Delta_{\hat \cP})$ implies that there exists $x_2 \in \pi_2(X^\Delta_{\hat \cP})$ such  $(x_1,x_2) \in X^\Delta_{\hat \cP}$.
The fact that $(x_1,x_2) \in X^\Delta_{\hat \cP}$ implies  that $g_{i_1}(x_1)=g_{i_2}(x_2)$ and also $g_{i_1'}(x_1)=g_{i_2'}(x_2)$.  The condition  $g_{i_1}(x_1)=g_{i_2}(x_2)$ is equivalent to $T(x_1)=x_2$. So indeed $g_{i_1'}(x_1)= g_{i_2'}(T(x_1))$.

\end{enumerate}
\end{proof}

The key tool from dimension theory that is used in the proof of Lemma \ref{lem:cG_dense} is a result  known as ``Ostrand's theorem'', which we now recall:

\begin{theorem}[Ostrand's theorem, \cite{MR177391}]
A compact metric space $X$ satisfies $\dim(X) <  n$ if and only if for every $\epsilon >0$ and $k \ge 0$ there exist $n+k$ families $\cC_1,\ldots,\cC_{n+k}$, such that each $\cC_i$ consists of pairwise disjoint closed subsets of $X$ of diameter at most $\epsilon$, and so that every element of $X$ is covered by at least  $k+1$ elements of $\bigcup_{j=1}^{n+k} \cC_j$.
\end{theorem}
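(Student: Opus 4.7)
My plan is to establish the density of $\cG_\cF(\hat Z)$ by showing that for every $f_0 \in C(Y,[0,1]^r)$ and every $\varepsilon>0$, one can find $f$ with $\|f - f_0\|_\infty < \varepsilon$ satisfying $f^\cF(x_1) \neq f^\cF(x_2)$ for every $(x_1,x_2) \in \hat Z$. For each block $B \in \hat \cP$, define $z_B : \hat Z \to Y$ by $z_B(x_1,x_2) := g_i^{(j)}(x_1,x_2)$ for any (and hence every) $(i,j) \in B$; this is well-defined since $\hat Z \subseteq X^\Delta_{\hat \cP}$. Set $Z_B := z_B(\hat Z)$. By $\cF$-coherence the sets $\{Z_B\}_{B \in \hat \cP}$ are pairwise disjoint compact subsets of $Y$, so for any choice of $\phi_B \in C(Z_B,[0,1]^r)$ with $\|\phi_B - f_0|_{Z_B}\|_\infty < \varepsilon$, the Tietze extension theorem produces a single $f \in C(Y,[0,1]^r)$ with $\|f - f_0\|_\infty < \varepsilon$ and $f|_{Z_B} = \phi_B$ for each $B$. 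Thus one may prescribe $f$ on each $Z_B$ independently.

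I would then linearize the separation requirement. Define $\tilde F_f : \hat Z \to ([0,1]^r)^{\hat \cP}$ by $\tilde F_f(x_1,x_2)_B := f(z_B(x_1,x_2))$ and the linear map $L : ([0,1]^r)^{\hat \cP} \to (\mathbb{R}^r)^N$ by $L(v)_i := v_{\hat \cP(i,1)} - v_{\hat \cP(i,2)}$. The equality $f^\cF(x_1) = f^\cF(x_2)$ is equivalent to $\tilde F_f(x_1,x_2) \in \ker L$, so the task becomes to arrange $\tilde F_f(\hat Z) \cap \ker L = \emptyset$. The kernel $\ker L$ has dimension $rm$, where $m$ is the number of connected components of the graph on vertex set $\hat \cP$ with an edge $\{\hat \cP(i,1),\hat \cP(i,2)\}$ for each $i \in [N]$ (by \Cref{lem:intersective_partition}(1) these are honest edges, not loops). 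I then split into cases following \Cref{def:intersective_partition}. In the non-intersective case, $\hat \cP$ decomposes as $\cP_1 \sqcup \cP_2$ with $\pi_j(\hat Z) \subseteq X_{\cP_j}$, so \eqref{eq:dim_inequality} gives $\dim \hat Z \le \dim \pi_1(\hat Z) + \dim \pi_2(\hat Z) < \tfrac{r}{2}|\hat \cP|$; moreover every edge crosses the two sides, so every component contains a block from each side and $m \le |\hat \cP|/2$. In the intersective case, \Cref{lem:intersective_partition}(2) identifies $\hat Z$ with the graph of a homeomorphism $T : \pi_1(\hat Z) \to \pi_2(\hat Z)$, which yields $\dim \hat Z = \dim \pi_1(\hat Z) < \tfrac{r}{2}\min(|\cP_1|,|\cP_2|)$; an analogous component-counting argument on the graph yields the same codimension inequality $\dim \hat Z < r|\hat \cP| - rm$.

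The final step is the perturbation itself, via Ostrand's theorem. One applies Ostrand to $\hat Z$, using the codimension inequality from the previous paragraph, to obtain for each $\delta > 0$ (and appropriate $k$) finitely many families of pairwise disjoint closed subsets of $\hat Z$ of diameter at most $\delta$ covering $\hat Z$ with multiplicity at least $k+1$. For each member $W$ of such a family and each block $B$, the image $z_B(W) \subseteq Z_B$ is a small closed set, and since the $Z_B$ are pairwise disjoint across $B$ the perturbations of $\phi_B$ on different $Z_B$ do not interact. One then pushes each coordinate of $\tilde F_f$ on each small piece toward a generic target in $([0,1]^r)^{\hat \cP} \setminus \ker L$; the multiplicity of the cover ensures that at every $(x_1,x_2) \in \hat Z$ sufficiently many coordinates are close to a generic target for the whole vector $\tilde F_f(x_1,x_2)$ to lie off $\ker L$. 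The main obstacle is the combinatorial calibration of the Ostrand families (number, multiplicity, and target values) with the codimension $r|\hat \cP| - rm$ of $\ker L$; this is precisely where the dimension hypothesis \eqref{eq:dim_inequality} is consumed.
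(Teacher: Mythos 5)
Your proposal does not address the stated result. The statement to be proved is Ostrand's theorem, a classical theorem of dimension theory that the paper cites from \cite{MR177391} without proof and subsequently \emph{uses} as a black box. What you have sketched instead is a proof strategy for \Cref{lem:cG_dense} (density of $\cG_\cF(\hat Z)$), and your own argument explicitly invokes Ostrand's theorem in its final step --- so even if your sketch were fully correct, it could not serve as a proof of Ostrand's theorem without circularity. A proof of Ostrand's theorem would have to argue directly from the definition of covering dimension (e.g., via nerve refinement or a swelling/shrinking argument producing the $n+k$ families with multiplicity $k+1$), and would make no reference to the dynamical setup, the partitions $\hat\cP$, or the map $f^\cF$.

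As a secondary remark: taken as a sketch of \Cref{lem:cG_dense}, your linearization via $L$ and the codimension count $r(|\hat\cP|-m)$ is an interesting alternative viewpoint, but the last step has a genuine gap. You apply Ostrand's theorem to $\hat Z$ itself and then push the resulting small closed pieces forward into $Y$ via $z_B$. In the non-intersective case the maps $z_B$ factor through the non-injective projections $\pi_j\vert_{\hat Z}$, so distinct pieces of the same Ostrand family can have overlapping images $z_B(W)$ in $Z_B$, which prevents you from prescribing $\phi_B$ piece-by-piece as you propose. The paper sidesteps this by applying Ostrand to $Z_1$ and $Z_2$ separately and pushing forward along the \emph{injective} maps $\tilde g_{t,j}$, at the cost of needing the combinatorial \Cref{lem:three_part_graph,lem:comb_no_intersection} in place of a single transversality count; your sketch would need a comparable device to resolve the overlap issue.
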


The statement here is equivalent but not identical to the original one in \cite[Theorem 1]{MR177391}, where it has been used to extend previous results of Kolmogorov and Arnold on Hilbert's 13th problem \cite{MR0111809,MR0111808}. See \cite[Theorem 2.4]{MR2418302} for another application of Ostrand's theorem.

\begin{definition}
Let $f:X \to \mathbb{R}$ be a function and $\epsilon >0$. Denote by $ \delta_f: (0,\infty) \rightarrow  [0,\infty]$  the function given by:
\begin{equation*}
\delta_f(\epsilon) := \sup\left\{ \delta' \ge 0 :~ \forall x_1,x_2 \in X,\ d(x_1,x_2) \le \delta' \rightarrow |f(x_1)-f(x_2)| \le \epsilon \right\}.
\end{equation*}

\end{definition}
Clearly, $\delta_f(\epsilon)$ is finite for any bounded $f:X \to \mathbb{R}$.
If $f:X \to \mathbb{R}$ is continuous, then (by compactness of $X$) $\delta_f(\epsilon) >0$ for any $\epsilon >0$.

\begin{lemma}\label{lem:cont_approx_disjoint_open_covers}
Let $Y$ be a compact metric space, and let $f_1,\ldots,f_r \in C(Y,[0,1])$ and $\epsilon >0$ be given. Let $\cC_1,\ldots,\cC_r$ be sets of subsets of $Y$, where each $\cC_\ell$ is a set  of pairwise disjoint closed subsets of $Y$, each having diameter smaller than
 $\delta_{f_\ell}(\epsilon/2)$.
 Then there exists  functions 
$\tilde f_1,\ldots,\tilde f_r \in C(Y,[0,1])$ such that:
\begin{itemize}
\item[(a)] $\| \tilde f_\ell- f_\ell\|_\infty \le \epsilon$ for all $\ell \in [r]$
\item [(b)] For every $\ell \in [r]$, $C,C' \in \cC_\ell$ $x \in C$, and $x'\in C'$, if $C \ne C'$ then $\tilde f_\ell(x)\ne \tilde f_\ell(x')$. 
\item[(c)] For every $\ell_1,\ell_2 \in [r]$ with $\ell_1 \ne \ell_2$ and every $x_1 \in \bigcup \cC_{\ell_1}$, $x_2 \in \bigcup \cC_{\ell_2}$ it holds $\tilde f_{\ell_1}(x_1) \ne \tilde  f_{\ell_2}(x_2)$.
\end{itemize}
\end{lemma}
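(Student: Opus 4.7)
The approach is to construct each $\tilde f_\ell$ as a small perturbation of $f_\ell$ which is \emph{constant} on every $C\in\cC_\ell$, with these constants chosen to be pairwise distinct across all pairs $(\ell,C)$. Given such a construction, (b) and (c) follow at once from the distinctness of the constants, and only (a) requires that the perturbation be small.

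The diameter hypothesis $\mathrm{diam}(C) < \delta_{f_\ell}(\epsilon/2)$ translates, by definition of $\delta_{f_\ell}$, into the oscillation bound $\sup_{x,x' \in C}|f_\ell(x)-f_\ell(x')| \le \epsilon/2$. Consequently, for each pair $(\ell,C)$ the set of admissible targets
\[ R_C^\ell := \bigl\{\, v\in [0,1] : |v - f_\ell(x)| \le \epsilon/2 \text{ for every } x\in C\,\bigr\} \]
is a subinterval of $[0,1]$ of length at least $\epsilon/2$. In the intended application via Ostrand's theorem the families $\cC_\ell$ are finite, so I would pick $v_C^\ell \in R_C^\ell$ sequentially over the finitely many pairs $(\ell,C)$, each time avoiding the finitely many previously chosen values; since $R_C^\ell$ is uncountable this is always possible, and one may even arrange a uniform positive separation of the targets.

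With the constants $v_C^\ell$ fixed, set $K_\ell := \bigcup_{C\in\cC_\ell} C$, which is closed as a finite disjoint union of closed sets. Define $h_\ell\in C(K_\ell,[-\epsilon/2,\epsilon/2])$ by $h_\ell(x) := v_C^\ell - f_\ell(x)$ for $x\in C\in\cC_\ell$; the oscillation bound guarantees the claimed range. By the bounded version of Tietze's extension theorem, extend $h_\ell$ to $\tilde h_\ell \in C(Y,[-\epsilon/2,\epsilon/2])$ and define
\[ \tilde f_\ell(x) := \max\bigl(0,\min(1,\,f_\ell(x)+\tilde h_\ell(x))\bigr). \]
Since the truncation to $[0,1]$ is $1$-Lipschitz, $\|\tilde f_\ell - f_\ell\|_\infty \le \|\tilde h_\ell\|_\infty \le \epsilon/2$, which yields (a). On each $C\in\cC_\ell$ one has $f_\ell(x)+\tilde h_\ell(x)=v_C^\ell\in[0,1]$, so the truncation is trivial on $K_\ell$ and $\tilde f_\ell|_C \equiv v_C^\ell$; conditions (b) and (c) then follow directly from the pairwise distinctness of the $v_C^\ell$ within a fixed $\ell$ and across different $\ell$'s respectively.

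The main obstacle lies in the target-choice step when one wishes to drop the finiteness assumption on the families $\cC_\ell$: for countable families the sequential choice is still fine, but $K_\ell$ need no longer be closed, and one has to check that $h_\ell$ extends continuously to $\overline{K_\ell}$ (via a compatibility argument at accumulation points of the family) before applying Tietze. Since Ostrand's theorem supplies finite families in the ambient proof of \Cref{thm:gen_Takens}, I would present the argument under that finiteness, where no such subtlety arises.
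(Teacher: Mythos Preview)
Your proof is correct and follows essentially the same approach as the paper: choose pairwise distinct constants $v_C^\ell$ within $\epsilon/2$ of $f_\ell(C)$ (using finiteness of the families), define $\tilde f_\ell$ to be constant equal to $v_C^\ell$ on each $C\in\cC_\ell$, and extend via Tietze. Your device of extending the bounded difference $h_\ell$ rather than the piecewise-constant target is in fact a slightly cleaner way to secure condition~(a), which the paper's one-line invocation of Tietze leaves implicit.
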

\begin{proof}
By the definition of $\delta_{f_\ell}(\epsilon/2)$, for every $\ell \in [d]$ and every $C \in \cC_\ell$ it holds that the diameter of $f_\ell(C)$ is at most $\epsilon/2$. For every $\ell \in [r]$ choose a function $v_\ell:\cC_\ell \to [0,1]$ so that:
\begin{itemize}
    \item[($\alpha$)] The distance between $v_\ell(C)$ and $f_\ell(C)$ is at most $\epsilon/2$ for every $\ell \in [r]$ and every $C \in \cC_\ell$.
    \item[($\beta$)] $v_\ell:\cC_\ell \to [0,1]$ is injective for every $\ell \in [r]$.
    \item[($\gamma$)] For every  $\ell_1,\ell_2 \in [r]$ with $\ell_1 \ne \ell_2$ $v_{\ell_1}(\cC_{\ell_1}) \cap v_{\ell_2}(\cC_{\ell_2}) = \emptyset$.
\end{itemize}
Such a function $v_\ell:\cC_\ell \to [0,1]$ exists  because each $\cC_\ell$ is finite.

For every $\ell \in [r]$, let $\hat f_\ell:\bigcup \cC_\ell \to [0,1]$ be defined by $\hat f= \sum_{C \in \cC_\ell}v_\ell(C) 1_{C}$. Since $\cC$ is a family of pairwise disjoint sets, each $\hat f_\ell$ is locally constant, hence continuous. Because the distance between $v_\ell(C)$ and $f_\ell(C)$ is at most $\epsilon/2$ for every $\ell \in [r]$ and every $C \in \cC_\ell$ and the diameter of $f_\ell(C)$ is at most $\epsilon/2$, it follows that $|f_\ell(x) - \hat f_\ell(x)| \le \epsilon$ for all $\ell \in [r]$.
By  the Tietze extension theorem, each $\hat f_\ell$ can be extended to a continuous function $\tilde f_\ell \in C(Y,[0,1])$ so that $\| \tilde f_\ell - f_\ell\| \le \epsilon$, so property \emph{(a)} is satisfied. Property \emph{(b)} for $\tilde f_\ell$ follows directly from \emph{($\beta$)}, and property \emph{(c)} follows directly from \emph{($\gamma$)}.

\end{proof}

We use the following ad-hoc combinatorial  lemma:

\begin{lemma}\label{lem:three_part_graph}
Let $V_1,V_2,W$ be  finite sets such that $|V_1| \ge |V_2|$,  
 and let  $F_1:W \to V_1$, $F_2:W \to V_2$ be surjective functions.
Suppose $V_1^* \subseteq V_1$, $V_2^* \subseteq V_2$ satisfy $|V_1^*| > \frac{|V_1|}{2}$ and $|V_2^*| > \frac{|V_2|}{2}$.
Then at least one of the following holds:
\begin{enumerate}
\item[(A)] There exists $w \in W$ such that $F_1(w) \in V_1^*$ and $F_2(w) \in V_2^*$. 
\item[(B)] There exists $w,w' \in W$ such that $F_2(w)=F_2(w')$ , $F_1(w) \ne F_1(w')$ and $F_1(w),F_1(w') \in V_1^*$ .
\end{enumerate}
\end{lemma}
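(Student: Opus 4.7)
The plan is to argue by contradiction: assume that neither \textup{(A)} nor \textup{(B)} holds, and derive a numerical contradiction from the hypotheses $|V_1^*| > |V_1|/2$, $|V_2^*| > |V_2|/2$, and $|V_1| \ge |V_2|$. The whole proof is a short double-counting argument, so the main task is to choose the right subset of $W$ to count.

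First, I would introduce the set $W^* := F_1^{-1}(V_1^*) \subseteq W$. Since $F_1$ is surjective, $F_1(W^*) = V_1^*$. The failure of \textup{(A)} translates directly into the statement that $F_2(W^*) \cap V_2^* = \emptyset$, i.e.\ $F_2(W^*) \subseteq V_2 \setminus V_2^*$. Since $|V_2^*| > |V_2|/2$, this gives
\[
|F_2(W^*)| \;\le\; |V_2 \setminus V_2^*| \;<\; \tfrac{|V_2|}{2} \;\le\; \tfrac{|V_1|}{2} \;<\; |V_1^*|.
\]

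Next, I would use the failure of \textup{(B)} to produce a lower bound for $|F_2(W^*)|$ that contradicts the upper bound above. The negation of \textup{(B)} says: whenever $w, w' \in W^*$ satisfy $F_2(w) = F_2(w')$, one has $F_1(w) = F_1(w')$. This means there is a well-defined map $\phi : F_2(W^*) \to V_1^*$ sending $v_2 \mapsto F_1(w)$ for any $w \in W^*$ with $F_2(w) = v_2$. Since $F_1(W^*) = V_1^*$, the map $\phi$ is surjective onto $V_1^*$, so $|F_2(W^*)| \ge |V_1^*|$. Combined with the chain of inequalities above, this is a contradiction, finishing the proof.

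There is no real obstacle here; the only delicate point is being careful that $W^*$ is defined by pulling back $V_1^*$ rather than $V_2^*$ (pulling back on the $V_2$ side would not interact as cleanly with \textup{(B)}, which constrains $F_1$-values). Once $W^*$ is fixed, the two conclusions \textup{(A)} and \textup{(B)} become complementary cardinality constraints on $|F_2(W^*)|$, and the majority hypotheses force them to be incompatible.
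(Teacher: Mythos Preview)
Your proof is correct and follows essentially the same counting idea as the paper: both arguments restrict attention to the $F_1$-preimages of $V_1^*$, observe that failure of (A) forces their $F_2$-images into the small set $V_2\setminus V_2^*$, and then use the majority hypotheses to force (B). The only cosmetic difference is that the paper picks a section $\psi:V_1\to W$ of $F_1$ and applies pigeonhole to $F_2$ on $\psi(V_1^*)$, whereas you take the full preimage $W^*=F_1^{-1}(V_1^*)$ and use the negation of (B) to factor $F_1$ through $F_2$, obtaining a surjection $F_2(W^*)\to V_1^*$; these are dual versions of the same count.
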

\begin{proof}
As  $F_1:W \to V_1$ is surjective, there exists an injective function $\psi:V_1 \to W$ such that $F_1(\psi(v_1))=v_1$ for all $v_1 \in V_1$. Let $W^* = \psi(V_1^*)$.
By injectivity of $\psi$, $|W^*| = |V_1^*|$. Using $|V_1^*| > \frac{|V_1|}{2}$ and $|V_1| \ge  |V_2|$ we conclude that  $|W^*| > |V_2| /2$. 
Now $|V_2 \setminus V_2^*| < \frac{|V_2|}{2}$, so $|V_2 \setminus V_2^*| <|W^*|$.
If 
$F_2(W^*) \cap V_2^* \ne \emptyset$ we are in case (A).  Otherwise, $F_2(W^*) \subseteq V_2 \setminus V_2^*$, and by the inequality  $|V_2 \setminus V_2^*| <|W^*|$ it follows that  the restriction of $F_2$ to $W*$ is not injective. This  implies that there exist
$w,w' \in W^*$  such that $w \ne w'$ and  $F_2(w)=F_2(w')$. Since $F_1$ is injective on $W^*$ by construction, and $F_1(W^*) = V_1^*$, it follows that  $F_1(w) \ne F_1(w')$ and $F_1(w),F_1(w') \in V_1^*$ , so we are in case (B).
\end{proof}

\begin{lemma}\label{lem:comb_no_intersection}
Let $V_1,V_2,W$ be  finite sets, let $Y$ be an arbitrary set,  and let  $F_1:W \to V_1$, $F_2:W \to V_2$, $\phi_1:V_1 \to Y$, $\phi_2:V_2 \to Y$ be functions, and $V_1^* \subseteq V_1$, $V_2^* \subseteq V_2$. Further, suppose that the restrictions of $\phi_1$ and $\phi_2$ to $V_1^*$ and $V_2^*$ respectively, are both injective and that $\phi_1(F_1(w)) \ne \phi_2(F_2(w))$ for every $w \in F_1^{-1}(V_1^*) \cap F_2^{-1}(V_2^*)$, and that at least one of the statements (A) and (B) from \Cref{lem:three_part_graph} hold. Then $\phi_1 \circ F_1 \ne \phi_2 \circ F_2$.
\end{lemma}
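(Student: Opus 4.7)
The plan is to prove the contrapositive-style dichotomy directly by case analysis on which of the two alternatives (A) or (B) from \Cref{lem:three_part_graph} holds, producing in each case a specific witness $w \in W$ at which $\phi_1 \circ F_1$ and $\phi_2 \circ F_2$ disagree.

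In case (A), there exists some $w \in W$ with $F_1(w) \in V_1^*$ and $F_2(w) \in V_2^*$, i.e.\ $w \in F_1^{-1}(V_1^*) \cap F_2^{-1}(V_2^*)$. The hypothesis on the intersection then immediately yields $\phi_1(F_1(w)) \ne \phi_2(F_2(w))$, so $\phi_1 \circ F_1 \ne \phi_2 \circ F_2$ as required.

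In case (B), we are given $w, w' \in W$ with $F_2(w) = F_2(w')$, $F_1(w) \ne F_1(w')$, and both $F_1(w), F_1(w') \in V_1^*$. Applying $\phi_2$ to the equality of the $F_2$-values gives $\phi_2(F_2(w)) = \phi_2(F_2(w'))$, while injectivity of $\phi_1$ restricted to $V_1^*$ together with $F_1(w) \ne F_1(w')$ gives $\phi_1(F_1(w)) \ne \phi_1(F_1(w'))$. If, for contradiction, $\phi_1 \circ F_1 = \phi_2 \circ F_2$ as functions on $W$, then evaluating at $w$ and $w'$ would yield $\phi_1(F_1(w)) = \phi_2(F_2(w)) = \phi_2(F_2(w')) = \phi_1(F_1(w'))$, contradicting the previous inequality. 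Hence $\phi_1 \circ F_1$ and $\phi_2 \circ F_2$ must differ at $w$ or $w'$.

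I do not expect any real obstacle here: once the dichotomy provided by \Cref{lem:three_part_graph} is invoked, the argument reduces to pure bookkeeping. The only subtlety is that in case (B) the witness is not a single element but a pair $(w, w')$ that together forces disagreement, exploiting the collision of $F_2$ against the injectivity of $\phi_1$ on $V_1^*$; this is the mechanism by which the combinatorial alternative (B) is cashed in for a non-equality of compositions.
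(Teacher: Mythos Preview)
Your proof is correct and follows essentially the same approach as the paper's own argument: a direct case split on alternatives (A) and (B), using the hypothesis on $F_1^{-1}(V_1^*)\cap F_2^{-1}(V_2^*)$ in case (A) and the injectivity of $\phi_1$ on $V_1^*$ together with the $F_2$-collision in case (B). The only cosmetic difference is that in case (B) you phrase the conclusion via contradiction, whereas the paper observes directly that $\phi_1(F_1(w))\ne\phi_1(F_1(w'))$ while $\phi_2(F_2(w))=\phi_2(F_2(w'))$ forces disagreement at one of $w,w'$.
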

\begin{proof}
Statement (A) from \Cref{lem:three_part_graph} implies that that there exists $w \in F_1^{-1}(V_1^*) \cap F_2^{-1}(V_2^*)$, so by assumption for such $w$ it holds $\phi_1(F_1(w)) \ne \phi_2(F_2(w))$, and so in this case  $\phi_1 \circ F_1 \ne \phi_2 \circ F_2$.

Now suppose statement (B) from \Cref{lem:three_part_graph} holds. Namely, we assume that there exists $w,w' \in W$ such that $F_2(w)=F_2(w')$, $F_1(w) \ne F_1(w')$ and $F_1(w),F_1(w') \in V_1^*$. By assumption, $\phi_1$ is injective on $V_1^*$. So $\phi_1(F_1(w)) \ne \phi_1(F_1(w'))$. But  $F_2(w)=F_2(w')$ 
implies $\phi_2(F_2(w))=\phi_2(F_2(w'))$, so in this case we again conclude that $\phi_1 \circ F_1 \ne \phi_2 \circ F_2$.
\end{proof}

We are now ready to prove \Cref{lem:cG_dense}.
\begin{proof}[Proof of \Cref{lem:cG_dense}]
Let $\hat \cP$ be a  partition of  $ \hat \cF$ identified with $[N] \times [2]$, and let $\hat Z \subseteq \hat{X}_{\hat \cP}$ be a compact $\cF$-coherent set.
 Let $f =\left( f_1,\ldots,f_r\right)\in C(Y,[0,1]^r)$ and $\epsilon >0$ be arbitrary. Our goal is to find $\tilde f \in \cG_\cF(\hat Z)$ such that $\| \tilde f -f\|_\infty < \epsilon$.
 Let 
\[ \delta: = \min_{ \ell \in [r]} \frac{1}{2}\delta_{f_\ell}(\epsilon /2).\]
 By the compactness of $X$ and the continuity of the maps $g_i:X \to Y$  one may find  $\eta >0$ such that for all $i \in [N]$ and $x,x' \in X$  satisfying $d(x,x') < \eta$ it holds $d'(g_i(x),g_i(x'))< \delta$.
For $j \in [2]$, let $Z_j:= \pi_j (\hat Z)$ denote the projection of $\hat Z \subseteq X \times X$ into the $j$'th copy of $X$. 
For $j \in [2]$ let $\cP_j$ denote the partition of $[N]$ defined by $\cP_j(i_1)=\cP_j(i_2)$ iff $\hat \cP(i_1,j)=\hat \cP(i_2,j)$, and let
  $M_j=|\cP_j|$ for $j \in [2]$. Assume without loss of generality that $|M_1| \ge |M_2|$.
Then it holds $Z_j \subseteq X_{\mathcal{P}_j}$ and so by  the inequality  \eqref{eq:dim_inequality}, $\dim(Z_j) < \frac{r}{2}M_j$ for $j \in  [2]$.

For $j \in [2]$, write $\cP_j = \{P_1^{(j)},\ldots,P_{M_j}^{(j)}\}$.
Then for every $j \in [2]$ and $t \in [M_j]$  there exists a function $\tilde g_{t,j} \in C(Z_j,Y)$ such that
$g_i\mid_{Z_j} = \tilde g_{t,j}$ for every $i \in P_{t}^{(j)}$.
By Ostrand's theorem and the condition $\dim(Z_2) < \frac{r}{2}M_2$, one may find families of sets 
\[\cC^{(2)}_{t,\ell} \mbox{ for }   t \in [M_2], \ell \in [r],\]
such that each $\cC^{(2)}_{t,\ell}$ is a family of pairwise disjoint closed subsets of $Z_2$ having diameter smaller than $\eta$, and so that every $x \in Z_2$ is covered by at least  $\left(M_2 - \dim(Z_2) +1\right) > \frac{r}{2}M_2$ elements of $\bigcup_{ t \in [M_2], \ell \in [r]}\cC^{(2)}_{t,\ell}$.
For every  $t \in [M_2]$ and $\ell \in [r]$ define:

\[\tilde \cC^{(2)}_{t,\ell} := \left\{ \tilde g_{t,2}(C):~ C \in \cC^{(2)}_{t,\ell}\right\}.\]

As $\hat Z \subseteq X^\Delta_{\hat \cP}$ is $\cF$-coherent, it holds that $\tilde g_{t,2}(Z_2) \cap  \tilde g_{t',2}(Z_2) = \emptyset$ for every $t \ne t'$ $t,t' \in [M_2]$. It follows that for each $\ell \in [r]$ it holds that $\bigcup_{t \in [M_2]} \tilde \cC^{(2)}_{t,\ell}$ is a collection of pairwise disjoint closed subsets of $Y$. By the choice of $\eta$, the diameter of each of these sets is less than $\delta$.

The next step of the proof splits into  two cases:
\begin{itemize}
\item \emph{Case 1: The partition $\hat \cP$ is non-intersective}. In this case, by Ostrand's theorem and the condition $\dim(Z_1) < \frac{r}{2}M_1$, one may find families of sets 
\[\cC^{(1)}_{t,\ell} \mbox{ for }   t \in [M_1], \ell \in [r],\]
such that each $\cC^{(1)}_{t,\ell}$ is a family of pairwise disjoint closed subsets of $Z_1$ having diameter smaller than $\eta$, and so that every $x \in Z_1$ is covered by  at least  $\left(M_1 - \dim(Z_1) +1\right) > \frac{r}{2}M_1$ elements of $\bigcup_{ t \in [M_1], \ell \in [r]}\cC^{(1)}_{t,\ell}$.
For every  $t \in [M_1]$ and $\ell \in [r]$ define:

\[\tilde \cC^{(1)}_{t,\ell} := \left\{ \tilde g_{t,1}(C):~ C \in \cC^{(1)}_{t,\ell}\right\}.\]

As $\hat Z \subseteq X^\Delta_{\hat \cP}$ is $\cF$-coherent and $\hat \cP$  is non-intersective it holds that $\tilde g_{t,1}(Z_1) \cap  \tilde g_{t',1}(Z_1) = \emptyset$ for every $t \ne t'$ $t,t' \in [M_1]$ and also $\tilde g_{t_1,1}(Z_1) \cap  \tilde g_{t_2,2}(Z_2) = \emptyset$ for every $t_1 \in [M_1]$ and $t_2 \in [M_2]$.
For every $\ell \in [r]$ let 

\[\tilde \cC_\ell :=  \bigcup_{t  \in [M_1]}\tilde \cC^{(1)}_{t,\ell} \cup \bigcup_{t \in [M_2]} \tilde \cC^{(2)}_{t,\ell} .\]

Then by the discussion above $\tilde \cC_\ell$ is  
a collection of pairwise disjoint compact subsets of $Y$ having diameter less than $\delta$.

\item  \emph{Case 2: The partition $\hat \cP$ is intersective}.
Let $I \subseteq [M_2]$ denote the set of indices $t_2\in [M_2]$ which corresponds to the ``intersecting'' partition elements of $\cP_2$. Namely, 
whenever $i \in [N]$ satisfies $\cP_2(i) = P_{t_2}^{(2)}$ then  there exists $i' \in [N]$ such that $\hat \cP(i,2)=\hat \cP(i',1)$.
 Thus there exists an injective function $\zeta : I \to [M_1]$ such that $\tilde g_{t_2,2}(x_2)=\tilde g_{\zeta(t_2),1} (x_1)$ for every $(x_1,x_2) \in \hat Z$. 
By \Cref{lem:intersective_partition} in this case there exists a homeomorphism $T:Z_1 \to Z_2$ such that $\tilde g_{t_2,2} \circ T = \tilde g_{\zeta(t_2),1}$ for every $t_2 \in I$.  In particular, in this case, $\dim Z_1 = \dim Z_2$. By our assumption $|M_1| \ge |M_2|$. So one may extend $\zeta$ in an arbitrary fashion to an injective function $\zeta:[M_2] \to [M_1]$. For every $t \in \zeta([M_2])$ let 
\[ \cC^{(1)}_{t,\ell} =  \left\{ T^{-1}(C):~ C \in \cC^{(2)}_{t,\ell}\right\}.\]
For each $t \in [M_1] \setminus  \zeta([M_2])$ let $\cC^{(1)}_{t,\ell}$ be an arbitrary finite collection of pairwise disjoint closed subsets of $Z_1$ having diameter smaller than $\eta$.

For every  $t \in [M_1]$ and $\ell \in [r]$ define:

\[\tilde \cC^{(1)}_{t,\ell} := \left\{ \tilde g_{t,1}(C):~ C \in \cC^{(1)}_{t,\ell}\right\}.\]

Note that $\tilde \cC^{(1)}_{\zeta(t),\ell} = \tilde \cC^{(2)}_{t,\ell}$ for every $t \in I$.

For every $\ell \in [r]$ let 

\[\tilde \cC_\ell := \bigcup_{t  \in [M_1] \setminus \zeta([M_2])}\tilde\cC^{(1)}_{t,\ell}  \cup   \bigcup_{t \in [M_2]} \tilde\cC^{(2)}_{t,\ell} .\]

As in case 1,  $\tilde \cC_\ell$ is  
a collection of pairwise disjoint compact subsets of $Y$ having diameter less than $\delta$.

\end{itemize}

By  the choice of $\delta$, using  \Cref{lem:cont_approx_disjoint_open_covers}, there exists  functions $\tilde f_1, \ldots,\tilde f_{r}:Y \to [0,1]$  such that 
\begin{itemize}
\item[(a)] $\| \tilde f_\ell- f_\ell\|_\infty \le \epsilon$ for all $\ell \in [r]$
\item [(b)] For every $\ell \in [r]$, $C,C' \in \tilde \cC_\ell$ $x \in C$, and $x'\in C'$, if $C \ne C'$ then $\tilde f_\ell(x)\ne \tilde f_\ell(x')$. 
\item[(c)] For every $\ell_1,\ell_2 \in [r]$ with $\ell_1 \ne \ell_2$ and every $x_1 \in \bigcup \cC_{\ell_1}$,$x_2 \in \bigcup \cC_{\ell_2}$ it holds $\tilde f_{\ell_1}(x_1) \ne \tilde f_{\ell_2}(x_2)$.
\end{itemize}

To complete the proof, we  show that  $\tilde f \in \cG_\cF(\hat Z)$. 
To prove that  $\tilde f \in \cG_\cF(\hat Z)$, we need to show that for every $(x_1,x_2) \in \hat Z$ there exists $i \in [N]$ and $\ell \in [r]$ such that 
\[ \tilde f_\ell (g_i(x_1)) \ne  \tilde f_\ell (g_i(x_2)).\]

Choose any $(x_1,x_2) \in \hat Z$. 

For $j \in [2]$, denote
$V_j = [M_j] \times [r] \times \{j\}$, 
and 
\[V_j^* = \{ (t,\ell,j) \in [M_j] \times [r] \times \{j\}:~ x_j \in \cC^{(j)}_{t,\ell}\}.\]
Denote $W:= [N] \times [r]$.
Define functions  $F_j:W \to V_j$ by 
\[ F_j(i,\ell) = (t,\ell) ~\Leftrightarrow~ (i,j) \in P_t^{(j)},~ j \in [2],~t\in [M_j], i \in [N],~ \ell \in [r].\]
Direct inspection reveals that the assumptions of \Cref{lem:three_part_graph} are satisfied with  
$V_1,V_2$,$V_1^*,V_2^*$,  $F_1:W \to V_1$ and $F_2:W \to V_2$ as above.
For each $j \in[2]$ define  $\phi_j:V_j \to Y$  by 
\[
\phi_j(t,\ell) = \tilde f_\ell(\tilde g_{t,j}(x_j)) \mbox{ for } \ell \in [r],~ j \in [2],~ t \in [M_j].
\]

Then the restrictions of $\phi_1$ and $\phi_2$ to $V_1^*$ and $V_2^*$ respectively, are both injective by the properties \emph{(b)} and \emph{(c)} of the functions  $\tilde f_\ell$.
 \Cref{lem:intersective_partition} implies that $\hat \cP(i,1) \ne \hat \cP(i,2)$ for any $i \in [N]$. So 
 $\phi_1(F_1(w)) \ne \phi_2(F_2(w))$ for every $w \in F_1^{-1}(V_1^*) \cap F_2^{-1}(V_2^*)$,  by injectivity of each of the functions $v_\ell$.

By \Cref{lem:comb_no_intersection} it holds that $\phi_1 \circ F_1 \ne \phi_2 \circ F_2$. This precisely implies that $\tilde f^{\cF} (x_1) \ne \tilde f^{\cF}(x_2)$, completing the proof.

\end{proof}

\bibliographystyle{alpha}
\bibliography{library,universal_bib, universal_bib2}

\def\cprime{$'$} \def\cprime{$'$} \def\cprime{$'$}
\begin{thebibliography}{BDLM08}

\bibitem[Ad57]{MR0111808}
V.~I. Arnol\cprime~d.
\newblock On functions of three variables.
\newblock {\em Dokl. Akad. Nauk SSSR}, pages 679--681, 1957.

\bibitem[BDLM08]{MR2418302}
N.~Brodskiy, J.~Dydak, M.~Levin, and A.~Mitra.
\newblock A {H}urewicz theorem for the {A}ssouad-{N}agata dimension.
\newblock {\em J. Lond. Math. Soc. (2)}, 77(3):741--756, 2008.

\bibitem[BG{\'S}20]{BGS20}
Krzysztof Bara\'nski, Yonatan Gutman, and Adam {\'S}piewak.
\newblock A probabilistic {T}akens theorem.
\newblock {\em Nonlinarity}, 33(9):4940--4966, 2020.

\bibitem[BG{\'S}22]{baranski2022shroer}
Krzysztof Bara{\'n}ski, Yonatan Gutman, and Adam {\'S}piewak.
\newblock On the {S}chroer--{S}auer--{O}tt--{Y}orke predictability conjecture
  for time-delay embeddings.
\newblock {\em Communications in Mathematical Physics}, 391(2):609--641, 2022.

\bibitem[BG{\'S}24]{baranski2024prediction}
Krzysztof Bara{\'n}ski, Yonatan Gutman, and Adam {\'S}piewak.
\newblock Prediction of dynamical systems from time-delayed measurements with
  self-intersections.
\newblock {\em Journal de Mathématiques Pures et Appliquées}, 186:103--149,
  2024.

\bibitem[Cab00]{CaballeroEmbed}
Victoria Caballero.
\newblock On an embedding theorem.
\newblock {\em Acta Math. Hungar.}, 88(4):269--278, 2000.

\bibitem[Eng95]{engelking1995theory}
Ryszard Engelking.
\newblock Theory of dimensions, finite and infinite.
\newblock {\em Sigma Series in Pure Mathematics}, 10, 1995.

\bibitem[Flo35]{F35}
A.~Flores.
\newblock {\"Uber n-dimensionale Komplexe, die im $\mathbb{R}^{2n+1}$ absolut
  selbstverschlungen sind.}
\newblock {\em Erg. Math. Kolloqu.}, 6:4--7, 1935.

\bibitem[GQS18]{GQS18}
Yonatan Gutman, Yixiao Qiao, and G\'{a}bor Szab\'{o}.
\newblock The embedding problem in topological dynamics and {T}akens' theorem.
\newblock {\em Nonlinearity}, 31(2):597--620, 2018.

\bibitem[GQT19]{gutman2019application}
Yonatan Gutman, Yixiao Qiao, and Masaki Tsukamoto.
\newblock Application of signal analysis to the embedding problem of
  $\mathbb{Z}^k$-actions.
\newblock {\em Geometric and Functional Analysis}, 29(5):1440--1502, 2019.

\bibitem[Gro99]{G}
Misha Gromov.
\newblock Topological invariants of dynamical systems and spaces of holomorphic
  maps. {I}.
\newblock {\em Math. Phys. Anal. Geom.}, 2(4):323--415, 1999.

\bibitem[GT20]{gutman2020embedding}
Yonatan Gutman and Masaki Tsukamoto.
\newblock Embedding minimal dynamical systems into {H}ilbert cubes.
\newblock {\em Inventiones Mathematicae}, 221:113--166, 2020.

\bibitem[Gut15]{Gut15Jaworski}
Yonatan Gutman.
\newblock Mean dimension and {J}aworski-type theorems.
\newblock {\em Proceedings of the London Mathematical Society},
  111(4):831--850, 2015.

\bibitem[Gut16]{Gut16}
Yonatan Gutman.
\newblock Takens embedding theorem with a continuous observable.
\newblock In {\em {E}rgodic theory - {A}dvances in dynamical systems.} Walter
  de Gruyter GmbH \& Co KG, 2016.

\bibitem[HBS15]{HBS15}
Franz Hamilton, Tyrus Berry, and Timothy Sauer.
\newblock Predicting chaotic time series with a partial model.
\newblock {\em Phys. Rev. E}, 92:010902, Jul 2015.

\bibitem[HGLS05]{hgls05distinguishing}
Chih-hao Hsieh, Sarah~M. Glaser, Andrew~J. Lucas, and George Sugihara.
\newblock Distinguishing random environmental fluctuations from ecological
  catastrophes for the {N}orth {P}acific {O}cean.
\newblock {\em Nature}, 435(7040):336--340, 2005.

\bibitem[HW41]{HW41}
Witold Hurewicz and Henry Wallman.
\newblock {\em Dimension {T}heory}.
\newblock Princeton Mathematical Series, v. 4. Princeton University Press,
  Princeton, N. J., 1941.

\bibitem[Jaw74]{J74}
A.~Jaworski.
\newblock {\em The Kakutani-Beboutov theorem for groups}.
\newblock Ph.D. dissertation. University of Maryland, 1974.

\bibitem[Kat23]{kato2023takens}
Hisao Kato.
\newblock Takens-type reconstruction theorems of one-sided dynamical systems.
\newblock {\em Nonlinearity}, 36(3):1571, 2023.

\bibitem[Kec95]{K95}
Alexander~S. Kechris.
\newblock {\em Classical descriptive set theory}, volume 156 of {\em Graduate
  Texts in Mathematics}.
\newblock Springer-Verlag, New York, 1995.

\bibitem[Kol57]{MR0111809}
A.~N. Kolmogorov.
\newblock On the representation of continuous functions of many variables by
  superposition of continuous functions of one variable and addition.
\newblock {\em Dokl. Akad. Nauk SSSR}, pages 953--956, 1957.

\bibitem[KY90]{KY90}
Eric~J. Kostelich and James~A. Yorke.
\newblock Noise reduction: finding the simplest dynamical system consistent
  with the data.
\newblock {\em Phys. D}, 41(2):183--196, 1990.

\bibitem[Li13]{l12}
Hanfeng Li.
\newblock Sofic mean dimension.
\newblock {\em Adv. Math.}, 244:570--604, 2013.

\bibitem[LT14]{LT12}
Elon Lindenstrauss and Masaki Tsukamoto.
\newblock Mean dimension and an embedding problem: an example.
\newblock {\em Israel Journal of Mathematics}, 199:573--584, 2014.

\bibitem[LW00]{LW}
Elon Lindenstrauss and Benjamin Weiss.
\newblock Mean topological dimension.
\newblock {\em Israel Journal of Mathematics}, 115:1--24, 2000.

\bibitem[Ner91]{nerurkar1991}
M~Nerurkar.
\newblock Observability and topological dynamics.
\newblock {\em Journal of Dynamics and Differential Equations}, 3(2):273--287,
  1991.

\bibitem[Noa91]{N91}
Lyle Noakes.
\newblock The {T}akens embedding theorem.
\newblock {\em Internat. J. Bifur. Chaos Appl. Sci. Engrg.}, 1(4):867--872,
  1991.

\bibitem[NV20]{NV18}
Raymundo Navarrete and Divakar Viswanath.
\newblock Prevalence of delay embeddings with a fixed observation function.
\newblock {\em Phys. D}, 414:132697, 15, 2020.

\bibitem[Ost65]{MR177391}
Phillip~A. Ostrand.
\newblock Dimension of metric spaces and {H}ilbert's problem {$13$}.
\newblock {\em Bull. Amer. Math. Soc.}, 71:619--622, 1965.

\bibitem[PCFS80]{PCFS80}
Norman~H. Packard, James~P. Crutchfield, J.~Doyne Farmer, and Robert~S. Shaw.
\newblock Geometry from a time series.
\newblock {\em Phys. Rev. Lett.}, 45:712--716, Sep 1980.

\bibitem[Rob05]{Rob05}
James~C. Robinson.
\newblock A topological delay embedding theorem for infinite-dimensional
  dynamical systems.
\newblock {\em Nonlinearity}, 18(5):2135--2143, 2005.

\bibitem[Rob11]{Rob11}
James~C. Robinson.
\newblock {\em Dimensions, embeddings, and attractors}, volume 186 of {\em
  Cambridge Tracts in Mathematics}.
\newblock Cambridge University Press, Cambridge, 2011.

\bibitem[SBDH97]{StarkEmbedSurvey}
Jaroslav Stark, David~S. Broomhead, Michael~E. Davies, and Jeremy~P. Huke.
\newblock Takens embedding theorems for forced and stochastic systems.
\newblock In {\em Proceedings of the {S}econd {W}orld {C}ongress of {N}onlinear
  {A}nalysts, {P}art 8 ({A}thens, 1996)}, volume~30, pages 5303--5314, 1997.

\bibitem[SBDH03]{StarkStochEmbed}
Jaroslav Stark, David~S. Broomhead, Michael~E. Davies, and Jeremy~P. Huke.
\newblock Delay embeddings for forced systems. {II}. {S}tochastic forcing.
\newblock {\em J. Nonlinear Sci.}, 13(6):519--577, 2003.

\bibitem[SGM90]{sgm90distinguishing}
George Sugihara, Bryan Grenfell, and Robert May.
\newblock Distinguishing error from chaos in ecological time-series.
\newblock {\em Philosophical Transactions of the Royal Society B-Biological
  Sciences}, 330(1257):235--251, 1990.

\bibitem[SM90]{sm90nonlinear}
George Sugihara and Robert~M May.
\newblock Nonlinear forecasting as a way of distinguishing chaos from
  measurement error in time series, 1990.

\bibitem[Sta99]{1999delay}
Jaroslav Stark.
\newblock Delay embeddings for forced systems. {I}. {D}eterministic forcing.
\newblock {\em J. Nonlinear Sci.}, 9(3):255--332, 1999.

\bibitem[SYC91]{SYC91}
Tim Sauer, James~A. Yorke, and Martin Casdagli.
\newblock Embedology.
\newblock {\em J. Statist. Phys.}, 65(3-4):579--616, 1991.

\bibitem[Tak81]{T81}
Floris Takens.
\newblock Detecting strange attractors in turbulence.
\newblock In {\em Dynamical systems and turbulence, {W}arwick 1980}, volume 898
  of {\em Lecture Notes in Math.}, pages 366--381. Springer, Berlin-New York,
  1981.

\end{thebibliography}
\end{document}